\newtheorem{thm}{Theorem}[section]
\newtheorem{cor}[thm]{Corollary}
\newtheorem{lem}[thm]{Lemma}
\newtheorem{prop}[thm]{Proposition}
\theoremstyle{definition}
\newtheorem{rem}[thm]{Remark}
\newtheorem{ques*}[thm]{Question}
\newtheorem*{theorem*}{Question}
\numberwithin{equation}{section}
\newcommand{\thmref}[1]{Theorem~\ref{#1}}
\newcommand{\lemref}[1]{Lemma~\ref{#1}}
\newcommand{\propref}[1]{Proposition~\ref{#1}}
\newcommand{\ZZ}{\mathbb{Z}}
\newcommand{\QQ}{\mathbb{Q}}
\renewcommand{\O}{\mathcal{O}}
\newcommand{\PP}{\mathbb{P}}
\newcommand{\e}{\epsilon}
\newcommand{\mL}{\mathcal{L}}
\newcommand{\tpi}{\tilde{\pi}}
\newcommand{\red}{\textcolor{red}}
\newcommand{\blue}{\textcolor{blue}}
\title{Smooth minimal surfaces of general type with \\
 $p_g=0$, $K^2=7$ and involutions}
\author{Yifan Chen, YongJoo Shin and Han Zhang}
\begin{document}

\date{}
\maketitle

\begin{abstract}
Lee and the second named author studied involutions on smooth minimal surfaces $S$ of general type with $p_g(S)=0$ and $K_S^2=7$. They gave the possibilities of the birational models $W$ of the quotients and the branch divisors $B_0$ induced by involutions $\sigma$ on the surfaces $S$.

In this paper we improve and refine the results of Lee and the second named author. We exclude the case of the Kodaira dimension $\kappa(W)=1$ when the number $k$ of isolated fixed points of an involution $\sigma$ on $S$ is nine. The possibilities of branch divisors $B_0$ are reduced for the case $k=9$, and are newly given for the case $k=11$. Moreover, we show that if the branch divisor $B_0$ has three irreducible components, then $S$ is an Inoue surface.
\end{abstract}

\section{Introduction}
In this article we focus on smooth minimal surfaces of general type with $p_g=0$ and $K^2=7$.
A complete classification of such surfaces is still out of reach. The first examples of such surfaces are
the Inoue surfaces (\cite{inoue}). Any Inoue surface is a Galois cover of the $4$-nodal cubic surface with Galois group, the Klein group $\ZZ/2\ZZ\times \ZZ/2\ZZ$, and its bicanonical map has degree $2$. Moreover, the involution associated the bicanonical map is contained in the Galois group. See \cite[Example~4.1]{bicanonical1}.

Mendes Lopes and Pardini show that the degree of the bicanonical map $\varphi$ is at most $2$ and the case $\deg \varphi=2$ is investigated in details (see \cite[Theorem~1.1]{bicanonical2}). In the case $\deg \varphi=2$, the involution associated to the bicanonical map is called the bicanonical involution. On the the hand, Lee and the second named author study involutions on $S$ in \cite{leeshin}.

Before proceeding, we introduce several notations. For a smooth minimal surface $S$ of general type with $p_g(S)=0$, $K_S^2=7$ and an involution $\sigma$, we denote by $R$ the divisorial fixed part of $\sigma$, by $k$ the number of isolated fixed points of $\sigma$, by
$\pi \colon S \rightarrow \Sigma$ the quotient map, and by $\eta \colon W \rightarrow \Sigma$ the minimal resolution of $\Sigma$. The map $\epsilon$ is the blow-up of $S$ at $k$ isolated fixed points of $\sigma$, and $\tpi$ is induced by the quotient map $\pi$. Then we have the commutative diagram:
\begin{align*}
\xymatrix{
V \ar"1,2"^{\epsilon} \ar"2,1"_{\tpi} & S \ar"2,2"^{\pi}\\
W \ar"2,2"^{\eta}                    &  \Sigma.
}
\end{align*}
Moreover, set $B:=\pi(R)$ and $B_0:=\eta^*(B)$. Since the surface $W$ has $k$ nodal curves $N_i,\ i=0,1,\ldots,k-1$ induced by the isolated fixed points of $\sigma$, the map $\tilde{\pi}$ is a flat double cover branched on $B_0+\sum_{i=0}^{k-1} N_i$. Then there exists a divisor $\mL$ such that
\begin{align}
2\mL\equiv B_0+\sum_{i=0}^{k-1} N_i \textrm{ and } \tilde{\pi}_*\mathcal{O}_V=\mathcal{O}_W\oplus \mathcal{O}_W (-\mL), \label{eq:coveringdata}
\end{align}
 where $\equiv$ is the linear equivalence relation.
For an irreducible component $\Gamma$ of $B_0$ with $g(\Gamma)=a$ and $\Gamma^2=b$, we denote it by $\substack{\Gamma\\(a,b)}$.

Lee and the second named author describe all the possible cases for $W$ and $B_0$
\cite[Table in page 3]{leeshin}, except the case where $\sigma$ is the bicanonical involution (equivalently $k=11$).
Looking into the case $k=9$ in \cite{leeshin}, we see that $\kappa(W)\le 1$ and the list of possibilities of the branch divisor $B_0$ is long except when $\kappa(W)=0$. From the result of \cite{bicanonical2}, if $\sigma$ is the bicanonical involution, then $W$ is a rational surface with $K_W^2=-4$ and $11$ pairwise disjoint nodal curves.
However, the branch divisor $B_0$ is not described in \cite{bicanonical2} either.\\

Section~2 of this article deals with the case $k=9$ by excluding the case $\kappa(W)=1$ and making the list in \cite{leeshin} of possibilities of $B_0$ much shorter.
\begin{thm}\label{thm:case9main}
Let $S$ be a smooth minimal surface of general type with $p_g(S)=0$, $K_S^2=7$ and an involution $\sigma$. Let $k$ be the number of isolated fixed points of $\sigma$.

Assume that $k=9$. Then $K_W^2=-2$ and either $W$ is birational to an Enriques surface or $W$ is a rational surface.

Moreover, if $W$ is a rational surface,
$B_0$ has the following possibilities:
   \begin{enumerate}[\upshape (1)]
    \item $B_0=\substack{\Gamma_0 \\ (2, 0)}+\substack{\Gamma_1 \\ (2, 0)}+\substack{\Gamma_2 \\ (1,-2)}$;
    \item $B_0=\substack{\Gamma_0 \\ (3, 0)}+\substack{\Gamma_1 \\ (1, -2)}$;
    \item $B_0=\substack{\Gamma_0 \\ (2, -2)}+\substack{\Gamma_1 \\ (2, 0)}$;
    \item $B_0=\substack{\Gamma_0 \\ (3,-2)}$.
    \end{enumerate}
\end{thm}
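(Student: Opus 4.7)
The plan is to harvest numerical invariants of $B_0$ from the flat double cover $\tpi\colon V\to W$ branched on $B_0+\sum_{i=0}^{8}N_i$. Since $V$ is the blow-up of $S$ at the nine isolated fixed points of $\sigma$, $K_V^2=-2$ and $\chi(\O_V)=1$. The standard covering relations
\[
K_V^2 = 2(K_W+\mL)^2, \qquad \chi(\O_V) = 2\chi(\O_W)+\tfrac12\mL(\mL+K_W),
\]
combined with $K_W\cdot N_i=0$, $N_i^2=-2$, $B_0\cdot N_i=0$, $N_i\cdot N_j=0$ for $i\ne j$, and $K_W^2=-2$ from \cite{leeshin}, pin down in the case $\chi(\O_W)=1$ the global intersection data $B_0^2=-2$, $K_W\cdot B_0=6$, hence $p_a(B_0)=3$. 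The identity $(K_W+B_0/2)^2=K_S^2/2=7/2$ provides a useful cross-check. Additionally, $\tpi_*\O_V=\O_W\oplus\O_W(-\mL)$ and $p_g(V)=q(V)=0$ yield $p_g(W)=q(W)=0$ and $h^i(K_W+\mL)=0$ for $i=0,1$, so $\chi(\O_W)=1$ holds whenever $\kappa(W)\le 1$.

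\textbf{Excluding $\kappa(W)=1$.} I argue by contradiction. If $\kappa(W)=1$, then $W$ is a two-point blow-up $\eta'\colon W\to W'$ of a relatively minimal properly elliptic surface $f\colon W'\to\PP^1$ (the base is $\PP^1$ since $q(W')=0$) with $\chi(\O_{W'})=1$ and $K_{W'}^2=0$, and the relation $K_W\cdot B_0=6$ becomes
\[
K_{W'}\cdot\eta'_*B_0 + a_1 + a_2 = 6, \qquad a_j=\mathrm{mult}_{p_j}(\eta'_*B_0).
\]
By Kodaira's canonical bundle formula $K_{W'}$ is a positive rational multiple $\lambda F$ of a general fiber $F$, with $\lambda$ determined by the multiple-fiber data (e.g.\ $\lambda=1/6$ for two fibers of multiplicities $(2,3)$, etc.). Two ingredients drive the contradiction: (a) the surface $V$, being of general type, admits no elliptic fibration, so the general fiber of the induced $V\to\PP^1$ has genus $\ge 2$, forcing $(B_0+\sum N_i)\cdot F \ge 2$; (b) the integrality of $\mL$, i.e.\ divisibility of $B_0+\sum N_i$ by $2$ in $\Pic(W)$, combined with the $B_0$ profiles listed for $\kappa(W)=1$ in \cite{leeshin}. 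A case analysis over admissible multiple-fiber configurations with $\chi(\O_{W'})=1$, $q(W')=p_g(W')=0$ rules out each in turn via (a), (b), or an explicit numerical incompatibility between the decomposition of $K_W\cdot B_0=6$ and the horizontal degree of $\eta'_*B_0$. This case-by-case analysis is where the main technical work lies, and is the principal obstacle.

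\textbf{Classification in the rational case.} With $\kappa(W)=1$ excluded and $W$ not birational to Enriques, $W$ is rational. The vanishings $h^i(\O_W(-\mL))=0$ for $i=0,1,2$ hold and Riemann--Roch is consistent. I then filter the rational-case entries of \cite{leeshin} through three tests: (i) the numerical equalities $B_0^2=-2$, $K_W\cdot B_0=6$, $p_a(B_0)=3$, with component-wise contributions $K_W\cdot\Gamma_j=2g(\Gamma_j)-2-\Gamma_j^2$; (ii) parity constraints on the $\Gamma_j^2$ and the mutual intersections $\Gamma_j\cdot\Gamma_\ell$ coming from $B_0+\sum N_i\in 2\,\Pic(W)$; (iii) Hodge index applied to $K_W$ against each component and to pairs of components (using $K_W^2=-2$). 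The four profiles (1)--(4) are precisely the survivors of all three filters.
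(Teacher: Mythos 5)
There are two genuine gaps here. First, your exclusion of $\kappa(W)=1$ is only a programme: you yourself flag the case analysis over multiple-fibre configurations as ``the principal obstacle,'' and nothing in your sketch closes it. The paper avoids that analysis entirely. It first shows, via a study of $(-1)$-curves (every $(-1)$-curve must meet some nodal curve $N_i$ and satisfies $DC\ge 1$, by the algebraic index theorem against the nef and big divisor $D=2K_W+B_0$), that $M_1=K_W+D$ is nef and big with $M_1K_W=0$. Writing $K_W=t^*K_{W'}+E$ with $E>0$ for the map to the minimal model, nefness of $M_1$ and of $K_{W'}$ forces $M_1t^*K_{W'}=0$, which is impossible when $M_1$ is big and $K_{W'}$ is nef and numerically nontrivial (as it is for a properly elliptic surface). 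You would need either to find such an argument or to actually carry out your fibre-by-fibre analysis; as written the step is missing.

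Second, your filters (i)--(iii) cannot reduce the Lee--Shin list to the four stated profiles. The entries of that list already satisfy $B_0^2=-2$, $K_WB_0=6$, $p_a(B_0)=3$ --- that is how the list was produced --- so filter (i) removes nothing; and filter (iii) as stated is not well-posed, since the Hodge index theorem needs a class of positive self-intersection and $K_W^2=-2<0$. The decisive constraints in the paper come from a rational fibration $f\colon W\to\PP^1$: one finds a $(-1)$-curve $G$ inside $|4K_W+D|$ meeting exactly one nodal curve $N_0$, contracts $G+N_0$, and applies the Dolgachev--Mendes Lopes--Pardini structure theorem for rational surfaces with many nodes to get $f$ with $FB_0=12$ and four singular fibres of type $N_{2j-1}+2G_j+N_{2j}$. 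Riemann--Hurwitz applied to $f|_\Gamma$ then yields $g(\Gamma)\ge 1$ and $F\Gamma$ even for every component $\Gamma$ of $B_0$; the first kills the profiles containing a rational component of self-intersection $-4$ or $-6$, and the second (combined with lattice computations forcing $F\Gamma_0=5$) kills the profiles $(3,2)+(1,-4)$ and $(3,2)+(1,-2)+(1,-2)$. Neither of these constraints is visible from the parity of $B_0+\sum N_i$ in $\Pic(W)$ or from index-theorem considerations alone, so your plan stalls exactly where the real work begins.
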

\noindent
Actually, we prove more. In particular, when $W$ is a rational surface, we exhibit a rational fibration $f \colon W\rightarrow \PP^1$ explicitly, classify the singular fibres of $f$ and calculate the divisor classes of the irreducible components of $B_0$ in terms of $K_W, F$ and certain irreducible components of the singular fibres of $f$, where $F$ is a general fibre of $f$.
See Propositions \ref{prop:rationalfib} and \ref{prop:branchdiv}.\\

We study the case $k=11$ in Section~3 and obtain similar results.
\begin{thm}\label{thm:case11main}
Let $S$ be a smooth minimal surface of general type with $p_g(S)=0$, $K_S^2=7$ and an involution $\sigma$. Let $k$ be the number of isolated fixed points of $\sigma$.

Assume that $k=11$. Then $K_W^2=-4$, $W$ is a rational surface and $B_0$ has the following possibilities:
      \begin{enumerate}[\upshape (1)]
    \item $B_0=\substack{\Gamma_0 \\ (3, 0)}+\substack{\Gamma_1 \\ (2, -2)}$;
    \item $B_0=\substack{\Gamma_0 \\ (3, -2)}+\substack{\Gamma_1 \\ (2, 0)}$;
    \item $B_0=\substack{\Gamma_0 \\ (4,-2)}$.
\end{enumerate}
\end{thm}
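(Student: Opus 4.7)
The plan is to mirror the argument of Section~2 for the bicanonical case $k=11$. As recalled in the introduction, $k=11$ is equivalent to $\sigma$ being the bicanonical involution; so the statements that $W$ is rational with $K_W^2=-4$ and carries eleven pairwise disjoint nodal curves are immediate from the theorem of Mendes Lopes--Pardini. To pin down $\mL$ numerically, I would apply the standard double cover formulas to $\tpi\colon V\to W$:
\[
\chi(\O_V)=2\chi(\O_W)+\tfrac{1}{2}\mL(\mL+K_W),\qquad K_V^2=2(K_W+\mL)^2.
\]
With $\chi(\O_V)=\chi(\O_S)=1$, $K_V^2=K_S^2-k=-4$ and $\chi(\O_W)=1$, this gives $K_W\cdot\mL=4$ and $\mL^2=-6$. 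Intersecting \eqref{eq:coveringdata} with $N_j$ (using $B_0\cdot N_j=0$ and $N_j^2=-2$) yields $\mL\cdot N_j=-1$, and then
\[
B_0^2=-2,\qquad B_0\cdot K_W=8,\qquad p_a(B_0)=4.
\]

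Smoothness of $V$ forces the branch $B_0+\sum N_i$, and hence $B_0$ itself, to be smooth, so the irreducible components $\Gamma_0,\dots,\Gamma_{m-1}$ of $B_0$ are smooth and pairwise disjoint. Writing $(a_j,b_j)=(g(\Gamma_j),\Gamma_j^2)$ and combining disjointness with adjunction, I obtain
\[
\sum_{j} b_j=-2,\qquad \sum_{j} a_j=m+3.
\]
The configurations in~(1)--(3) exhaust the solutions with $m\in\{1,2\}$ having the prescribed $(a_j,b_j)$; the task is therefore to rule out all further numerical solutions, in particular those with $m\ge 3$ and those containing a rational or low self-intersection component.

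For this exclusion I would construct, in parallel with \propref{prop:rationalfib}, an explicit rational fibration $f\colon W\to\PP^1$ adapted to the configuration of the eleven nodal curves, classify its singular fibres, and express each $\Gamma_j$ numerically in terms of $K_W$, a general fibre $F$ and selected components of the singular fibres, as in \propref{prop:branchdiv}. The compatibility of $\tpi$ with $f$ restricts the possible values of $\Gamma_j\cdot F$ and $\mL\cdot F$ tightly, and when combined with the positivity of $K_V$ (which follows from $S$ being of general type) and the intersection data above, it should eliminate the spurious configurations and leave precisely (1)--(3). I expect the construction of $f$ and the case-by-case analysis of its singular fibres to be the principal obstacle: conceptually this parallels the $k=9$ analysis, but with $K_W^2=-4$ and eleven (rather than nine) nodal curves the numerical room is tighter, and ruling out the residual configurations with $m\ge 3$ will require the most care.
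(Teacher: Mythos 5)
Your setup is correct and your overall strategy coincides with the paper's: the invariants $K_W^2=-4$, $B_0^2=-2$, $K_WB_0=8$, $p_a(B_0)=4$ follow exactly as you compute (the paper quotes them from \cite{involution, involutioncampedelli, leeshin} in \lemref{lem:D}), and the decisive tool is indeed a rational fibration on $W$ built as in \propref{prop:rationalfib}. But the part you defer -- ``the case-by-case analysis \dots should eliminate the spurious configurations'' -- is precisely where the whole proof lives, and as written your plan has genuine gaps there. First, the Dolgachev--Mendes Lopes--Pardini fibration cannot be applied directly to the eleven nodal curves: its standard fibres $N+2G'+N'$ pair the nodes, so one must first produce a $(-1)$-curve $G$ with $DG=1$ meeting exactly one node $N_0$ (this comes from $M_1=K_W+D$ being nef and big, $h^0(2K_W+D)=4$ and $(2K_W+D)^2=-2$, plus a parity argument with $\mL$), contract $G+N_0$, and only then invoke the theorem on the resulting ten-nodal surface. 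Second, the step that actually kills almost all configurations is not an intersection-number computation but the Riemann--Hurwitz inequality for $f|_{\Gamma_i}\colon\Gamma_i\to\PP^1$: since $F\equiv N_3+2G_2+N_4$ and $N_j\Gamma_i=0$, the degree $d_i=F\Gamma_i$ is even and $\ge 2$, and the five double fibres force $2g(\Gamma_i)-2\ge -2d_i+\tfrac52 d_i$, hence $g(\Gamma_i)\ge 2$ for \emph{every} component. Combined with $\sum_j a_j=m+3$ this is what bounds the number of components by $3$; your constraints $\sum b_j=-2$, $\sum a_j=m+3$ alone admit infinitely many solutions (including components with $g=0$ or $1$), so without this genus bound the exclusion cannot get off the ground.

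Third, even after the genus bound there remains one configuration, $B_0=\substack{\Gamma_0\\(2,-2)}+\substack{\Gamma_1\\(2,0)}+\substack{\Gamma_2\\(2,0)}$, which satisfies every numerical condition you list \emph{and} the genus bound. The paper eliminates it by a non-numerical argument: the index theorem gives $\Gamma_1\equiv\Gamma_2\equiv -K_W+F$, so $|\Gamma_1|$ is a base-point-free pencil; since $\Gamma_0\Gamma_1=0$, the curve $\Gamma_0$ lies in a member of that pencil, whence $4=F\Gamma_0\le F\Gamma_1=2$, a contradiction. Your plan does not anticipate that such a step is needed, and no amount of intersection-theoretic bookkeeping of the kind you describe will rule this case out. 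So while the architecture of your argument is the right one, the proposal as it stands is a correct skeleton with the three load-bearing arguments (construction of $G$ before contracting, the Riemann--Hurwitz genus bound, and the pencil argument for the three-component case) still missing.
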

\noindent
See Propositions \ref{prop:11rationalfib} and \ref{prop:11branchdiv} for more details.\\

Combining with the result in \cite{leeshin}, we have the following theorem.
Note that (1)-(5) in the following theorem are results of \cite{leeshin} and the assertion $K_W^2=-4$ in (7) is also known in
\cite{bicanonical2}.

\begin{thm}\label{thm:main}
Let $S$ be a smooth minimal surface of general type with $p_g(S)=0$, $K_S^2=7$ and an involution $\sigma$. Let $k$ be the number of isolated fixed points of $\sigma$. Then one of the following cases holds:
\begin{enumerate}[\upshape (1)]
\item $k=5$, $W$ is a smooth minimal surface of general type with $K_W^2=2$ and $B_0=\substack{\Gamma_0 \\ (1,-2)}$.
\item $k=7$, $W$ is a smooth minimal surface of general type with $K_W^2=1$ and $B_0=\substack{\Gamma_0 \\ (3,2)}$.
\item $k=7$, $W$ is a smooth surface of general type with $K_W^2=0$ whose smooth minimal model $W'$ has $K_{W'}^2=1$, and $B_0$ has the following possibilities:
        \begin{enumerate}[\upshape (a)]
            \item $B_0=\substack{\Gamma_0 \\ (2, -2)}$;
            \item $B_0=\substack{\Gamma_0\\ (2, 0)}+\substack{\Gamma_1\\ (1,-2)}$.
        \end{enumerate}
\item $k=7$, $W$ is a smooth minimal properly elliptic surface and $B_0$ has the following possibilities:
        \begin{enumerate}[\upshape (a)]
            \item $B_0=\substack{\Gamma_0 \\ (2, -2)}$;
            \item $B_0=\substack{\Gamma_0\\ (2, 0)}+\substack{\Gamma_1\\ (1,-2)}$.
        \end{enumerate}

\item $k=9$, $W$ is birational to an Enriques surface, $K_W^2=-2$ and $B_0$ has the following possibilities:
        \begin{enumerate}[\upshape (a)]
            \item $B_0=\substack{\Gamma_0 \\ (3,-2)}$;
            \item $B_0=\substack{\Gamma_0 \\ (3,0)}+\substack{\Gamma_1\\ (1,-2)}$.
        \end{enumerate}
\item $k=9$, $W$ is a rational surface with $K_W^2=-2$ and $B_0$ has the following possibilities:
   \begin{enumerate}[\upshape (a)]
    \item $B_0=\substack{\Gamma_0 \\ (2, 0)}+\substack{\Gamma_1 \\ (2, 0)}+\substack{\Gamma_2 \\ (1,-2)}$;
    \item $B_0=\substack{\Gamma_0 \\ (3, 0)}+\substack{\Gamma_1 \\ (1, -2)}$;
    \item $B_0=\substack{\Gamma_0 \\ (2, -2)}+\substack{\Gamma_1 \\ (2, 0)}$;
    \item $B_0=\substack{\Gamma_0 \\ (3,-2)}$.
    \end{enumerate}
\item $k=11$, $W$ is a rational surface with $K_W^2=-4$ and $B_0$ has the following possibilities:
       \begin{enumerate}[\upshape (a)]
    \item $B_0=\substack{\Gamma_0 \\ (3, 0)}+\substack{\Gamma_1 \\ (2, -2)}$;
    \item $B_0=\substack{\Gamma_0 \\ (3, -2)}+\substack{\Gamma_1 \\ (2, 0)}$;
    \item $B_0=\substack{\Gamma_0 \\ (4,-2)}$.
\end{enumerate}
\end{enumerate}
\end{thm}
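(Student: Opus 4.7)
The plan is to assemble the statement from previously available results together with the refinements proved earlier in the paper, rather than to redo any classification from scratch. The possible values of the number $k$ of isolated fixed points of $\sigma$ are already pinned down to $k\in\{5,7,9,11\}$ in \cite{leeshin} by a combination of the holomorphic Lefschetz formula, the topology of the quotient, and the bound $K_S^2=7$. For $k\in\{5,7\}$ the corresponding descriptions of $W$ and $B_0$ listed in items~(1)--(4) are then quoted directly from the table of \cite{leeshin}; no new work is needed.

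For $k=9$ I would invoke Theorem~\ref{thm:case9main}. That theorem both fixes $K_W^2=-2$ and rules out the case $\kappa(W)=1$ left open in \cite{leeshin}, so the only remaining alternatives are that $W$ is birational to an Enriques surface or $W$ is a rational surface. The rational subcase, which produces the four branch divisors of item~(6), is precisely the content of Theorem~\ref{thm:case9main}. For the Enriques subcase I would intersect the long branch-divisor list of \cite{leeshin} with the constraint $\kappa(W)=0$; the configurations involving a fibration incompatible with an Enriques structure drop out, leaving the two possibilities recorded in item~(5).

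For $k=11$ I would invoke \cite{bicanonical2}: the involution $\sigma$ is then the bicanonical involution, $W$ is a rational surface with $K_W^2=-4$, and it carries $11$ pairwise disjoint nodal curves. The branch divisor $B_0$, which is not described in \cite{bicanonical2}, is classified into the three possibilities of item~(7) by Theorem~\ref{thm:case11main}. Combining these inputs exhausts all cases and yields (1)--(7).

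The real obstacle is not in this theorem, which is essentially a bookkeeping compilation once the pieces are in place. It lies in Theorems~\ref{thm:case9main} and~\ref{thm:case11main}, where one must exclude extraneous Kodaira-dimension possibilities and eliminate superfluous configurations of irreducible components of $B_0$ through careful intersection, adjunction, and genus computations on $W$ (this is where the rational fibration $f\colon W\to \PP^1$ and its singular fibres, introduced in Propositions~\ref{prop:rationalfib} and~\ref{prop:branchdiv}, do the heavy lifting).
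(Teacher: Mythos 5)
Your proposal matches the paper's own treatment: Theorem~\ref{thm:main} is assembled by quoting \cite{leeshin} for items (1)--(5) (the Enriques subcase in (5) being exactly the short $\kappa(W)=0$ list of \thmref{thm:known}, so no extra pruning is actually needed there), invoking \thmref{thm:case9main} together with Propositions~\ref{prop:rationalfib} and \ref{prop:branchdiv} for item (6), and invoking \cite{bicanonical2} together with \thmref{thm:case11main} for item (7). You also correctly identify that the substantive work lives in the two case theorems rather than in this compilation.
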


To the best knowledge of the authors, the following are the known examples of surfaces of general type with $p_g=0$ and $K_S^2=7$:
the $4$-dimensional family of Inoue surfaces \cite{inoue, inouemfd, bicanonical1}, the $3$-dimensional family of examples constructed by the first named author \cite{chennew, commuting}, the $2$-dimensional family of examples constructed by the first named and second named authors \cite{chenshin18} (including the surface constructed by Rito \cite{rito15}), and the surfaces constructed by Calabri and Stagnaro \cite{calabristagnaro}. From all these examples, we see that there are examples for cases (1), (4)(a), (5)(a), (5)(b), (6)(a), (6)(b) and (7)(a) in \thmref{thm:main}. See Section~4 for more details.\\

In \cite[Theorem~1.1]{chenshin18}, the case (7)(a) is completely classified:
if $B_0=\substack{\Gamma_0 \\ (3, 0)}+\substack{\Gamma_1 \\ (2, -2)}$
then $S$ is an Inoue surface.
It follows that  $S$ admits another two more involutions besides $\sigma$.
We prove the following similar theorem for the case (6)(a).
\begin{thm}\label{thm:threecpnts}Let $S$ be a smooth minimal surface of general type with $p_g(S)=0$, $K_S^2=7$ and an involution $\sigma$. Assume that $B_0=\substack{\Gamma_0 \\ (2, 0)}+\substack{\Gamma_1 \\ (2, 0)}+\substack{\Gamma_2 \\ (1,-2)}$. Then $S$ is an Inoue surface.
\end{thm}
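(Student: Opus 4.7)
The plan is to mimic the approach used for case (7)(a) in \cite[Theorem~1.1]{chenshin18}: produce a second involution $\tau$ on $S$ commuting with $\sigma$, such that the Klein group $G=\langle\sigma,\tau\rangle$ realizes $S$ as a $(\ZZ/2\ZZ)^2$-Galois cover of the $4$-nodal cubic surface, i.e., as an Inoue surface in the sense of \cite[Example~4.1]{bicanonical1}.

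First I would apply Propositions~\ref{prop:rationalfib} and \ref{prop:branchdiv} to write down an explicit rational fibration $f\colon W\rightarrow \PP^1$, the list of its singular fibres, and the divisor classes of $\Gamma_0,\Gamma_1,\Gamma_2$ and the nine nodal curves $N_i$ in terms of $K_W$, a general fibre $F$, and components of the reducible fibres. The numerical symmetry between $\Gamma_0$ and $\Gamma_1$ (same genus, same self-intersection, both plausibly $2$-sections of $f$) should force them to meet the reducible fibres in matching patterns, which suggests an involution $\tau_W$ of $W$ swapping them while preserving $\Gamma_2$ and $\sum_{i=0}^{8} N_i$ setwise.

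Next I would construct $\tau_W$ explicitly, verify that it fixes the class $\mL\in\Pic(W)$ appearing in (\ref{eq:coveringdata}), and hence lift it to an automorphism of the double cover $\tpi\colon V\rightarrow W$. This lift descends to an involution $\tau$ of $S$ because $\tau_W$ preserves the nine nodes corresponding to the isolated fixed points of $\sigma$. Commutativity $\sigma\tau=\tau\sigma$ is automatic since $\sigma$ is the covering involution of $\tpi$. Setting $G=\langle\sigma,\tau\rangle\cong (\ZZ/2\ZZ)^2$, I would compute the divisorial fixed part and the number of isolated fixed points of the third involution $\sigma\tau$ from the covering data, and verify that $\sigma\tau$ is the bicanonical involution (so $k=11$ for it), matching the structure of an Inoue surface. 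Identifying $S/G$ with the $4$-nodal cubic surface then follows from the resulting branch data: the three intermediate double covers combine into the standard bidouble cover description of an Inoue surface recorded in \cite[Example~4.1]{bicanonical1}.

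The main obstacle will be the explicit construction of $\tau_W$: the numerical symmetry $\Gamma_0\leftrightarrow \Gamma_1$ only suggests the existence of such an involution, and promoting it to a genuine biregular involution of $W$ preserving the line bundle $\mL$ (not just the class $2\mL$) requires a careful case analysis of the singular fibres of $f$ and of their interaction with the branch divisor, likely after an auxiliary birational modification of $W$ to make the symmetry manifest.
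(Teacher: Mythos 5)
Your overall strategy --- produce a second involution $\tau$ on $S$ commuting with $\sigma$ and then recognize $S$ as a bidouble cover --- is the right one and matches the paper in spirit, but the proposal has a genuine gap exactly where you flag ``the main obstacle'': you never construct the second involution, and the mechanism you propose does not work as stated. An isometry of $\mathrm{Num}(W)$ exchanging the classes of $\Gamma_0$ and $\Gamma_1$ is not an automorphism of $W$; a rational surface of Picard number $12$ has no reason to carry a biregular involution realizing a given lattice symmetry, and producing one (as a concrete Cremona or geometric symmetry compatible with $B_0$, the nine nodal curves and the line bundle $\mL$) is essentially the whole content of the theorem. Deferring this to ``a careful case analysis of the singular fibres'' leaves the proof without its key step.

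The paper sidesteps this by constructing $\tau$ directly on $S$ rather than lifting from $W$. Since $\Gamma_0\equiv\Gamma_1$ by \propref{prop:branchdiv}~(1) and $\Gamma_0\cap\Gamma_1=\emptyset$, the pencil $|\Gamma_0|$ is base point free and defines a genus $2$ fibration on $W$, which descends to $\Sigma$ because all $N_i$ lie in its fibres. A general fibre $I$ avoids the branch locus, so $\pi^*I\rightarrow I$ is \'etale of degree $2$; by Xiao's theorem \cite{xiao} the surface $S$ admits no genus $2$ fibration, so $\pi^*I$ is connected, hence a genus $3$ curve, and an \'etale double cover of a genus $2$ curve is hyperelliptic. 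The hyperelliptic involution of the resulting genus $3$ fibration on $S$ is the desired $\tau$; then \cite[Theorem~1.2]{chenauto} gives $\langle\sigma,\tau\rangle\cong\ZZ/2\ZZ\times\ZZ/2\ZZ$, and the estimate $R_\tau R\ge 8$ (from $I'R_\tau=8$ on a general fibre) feeds into \cite[Theorem~1.1]{commuting}, whose case (a) is precisely the Inoue surfaces --- replacing your proposed hand-verification of the bidouble-cover data over the $4$-nodal cubic. If you want to salvage your plan, this fibration construction is the missing mechanism for producing $\tau$.
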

See the end of Section~4 for the proof.

\section{The case $k=9$}
In this section, we consider the case $k=9$. We use the same notation in the introduction.
\subsection{Known results}
The starting point is the following theorem by Lee and the second named author.

Denote $D:=2K_W+B_0$. Since $\tilde{\pi}^*D\equiv \epsilon^*(2K_S)$ $$D^2=14, \textrm{ and } D \textrm{ is nef and big.}$$

\begin{thm}[{\cite[Theorem in Page 122 and the list in Page 123 and Theorem 3.5~(iii)]{leeshin}}]\label{thm:known}Assume that $k=9$. Then
$$DK_W=2,~~K_W^2=-2,~~K_WB_0=6,~~B_0^2=-2~\text{and}~\kappa(W)\le 1.$$
\begin{enumerate}[\upshape (1)]
\item Assume further that $\kappa(W)=0$. Then either $B_0=\substack{\Gamma_0 \\ (3,-2)}$ or $B_0=\substack{\Gamma_0 \\ (3,0)}+\substack{\Gamma_1\\ (1,-2)}$.
\item Assume further that $\kappa(W)=-\infty$ or $1$. Then $B_0$ is one of the following:
\begin{enumerate}[\upshape (a)]
    \item $B_0=\substack{\Gamma_0 \\ (4, 2)}+\substack{\Gamma_1 \\ (0, -4)}$.
    \item $B_0=\substack{\Gamma_0 \\ (3,-2)}$.
    \item $B_0=\substack{\Gamma_0 \\ (4, 4)}+\substack{\Gamma_1 \\ (1, -2)}+\substack{\Gamma_2 \\ (0,-4)}$.
    \item $B_0=\substack{\Gamma_0 \\ (4, 4)}+\substack{\Gamma_1 \\ (0, -6)}$.
    \item $B_0=\substack{\Gamma_0 \\ (3, 0)}+\substack{\Gamma_1 \\ (1, -2)}$.
    \item $B_0=\substack{\Gamma_0 \\ (3, 2)}+\substack{\Gamma_1 \\ (1, -4)}$.
    \item $B_0=\substack{\Gamma_0 \\ (2, -2)}+\substack{\Gamma_1 \\ (2, 0)}$.
    \item $B_0=\substack{\Gamma_0 \\ (3, 2)}+\substack{\Gamma_1 \\ (1, -2)}+\substack{\Gamma_2 \\ (1,-2)}$.
    \item $B_0=\substack{\Gamma_0 \\ (2, 0)}+\substack{\Gamma_1 \\ (2, 0)}+\substack{\Gamma_2 \\ (1,-2)}$.
\end{enumerate}
\end{enumerate}

\end{thm}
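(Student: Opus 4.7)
The plan is to derive the numerical data from the standard theory of flat double covers applied to $\tpi\colon V\to W$, combined with the invariants of $V$ (the blow-up of $S$ at the nine isolated fixed points), and then to enumerate the admissible branch divisors $B_0$ case by case via adjunction. First I would compute $K_V^2=K_S^2-9=-2$, $\chi(\O_V)=\chi(\O_S)=1$, and $p_g(V)=q(V)=0$, then exploit the splitting $\tpi_*\O_V=\O_W\oplus\O_W(-\mL)$ together with the classical identities $K_V^2=2(K_W+\mL)^2$, $\chi(\O_V)=2\chi(\O_W)+\tfrac{1}{2}\mL(K_W+\mL)$, and $H^i(\O_V)=H^i(\O_W)\oplus H^i(\O_W(-\mL))$ to obtain $p_g(W)=q(W)=0$, $(K_W+\mL)^2=-1$, and $\mL(K_W+\mL)=-2$.

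Next I would rewrite these relations using $2\mL\equiv B_0+\sum N_i$, together with the standard intersection data $N_i^2=-2$ and $K_WN_i=0$ for the pairwise disjoint nodal curves, to convert the identities into a linear system in $K_W^2$, $K_WB_0$, $B_0^2$. Coupled with $D^2=14$ (from $\tpi^*D\equiv\epsilon^*(2K_S)$) and the Hodge index inequality $(DK_W)^2\ge D^2 K_W^2$ applied to the nef and big divisor $D$, the system pins down $K_W^2=-2$, $K_WB_0=6$, $B_0^2=-2$, and $DK_W=2$. The bound $\kappa(W)\le 1$ then follows because $K_W^2<0$ forces $W$ to be non-minimal, while a comparison of plurigenera through the injection $\tpi^*H^0(mK_W)\hookrightarrow H^0(mK_V)$ against $P_m(V)=P_m(S)=1+7\binom{m}{2}$ rules out $\kappa(W)=2$.

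With the numerical invariants in hand, I would enumerate the possibilities for $B_0$ via adjunction: for each irreducible component $\Gamma$ one has $K_W\Gamma=2g(\Gamma)-2-\Gamma^2$, subject to $\sum K_W\Gamma_i=6$, $\sum\Gamma_i^2+2\sum_{i<j}\Gamma_i\Gamma_j=-2$, non-negative genera, and non-negative intersection numbers $\Gamma_i\Gamma_j$. Stratifying by Kodaira dimension: if $\kappa(W)=0$, the minimal model of $W$ is necessarily Enriques (the only Kodaira-dimension-zero surface class with $p_g=q=0$), and the restricted geography of small-$K_W$-degree effective divisors on such a model yields the short list (1)(a)-(b); if $\kappa(W)\in\{-\infty,1\}$, a combinatorial search over admissible $(g(\Gamma),\Gamma^2)$-tuples yields the list (2)(a)-(i). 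Each candidate must additionally satisfy the $2$-divisibility of $B_0+\sum N_i$ in $\Pic(W)$ and admit an effective $\mL$ with $h^0(K_W+\mL)=0$; these filters discard the remaining combinatorial possibilities.

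The principal obstacle is the case analysis in the final step: verifying that every surviving combinatorial solution is realized by a genuine effective reduced $B_0$ on some admissible $W$, and conversely that no further configurations slip past the numerical filters, requires careful bookkeeping of intersection patterns together with the nine $(-2)$-curves $N_i$ disjoint from $B_0$. A subtle point is the stratification at $\kappa(W)=0$, where one must verify that no K3 blow-up model can arise (excluded by $p_g(W)=0$) and that the Enriques model actually supports the prescribed branch configurations.
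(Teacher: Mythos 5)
First, a remark on the comparison: the paper offers no proof of \thmref{thm:known} at all --- it is imported verbatim from \cite{leeshin} --- so your attempt must be judged on its own merits. Its architecture (flat double-cover identities for $\tpi$, then enumeration of $B_0$ by adjunction) is indeed the standard route of \cite{leeshin}, but the central numerical step has a genuine gap. The identities $K_V^2=2(K_W+\mL)^2=-2$ and $\chi(\O_V)=2\chi(\O_W)+\tfrac12\mL(K_W+\mL)$ (together with $p_g(W)=q(W)=0$) give exactly \emph{two} independent relations among the three unknowns $K_W^2$, $K_W\mL$, $\mL^2$; translated via $2\mL\equiv B_0+\sum N_i$ they yield $DK_W=2$ and $D^2=14$ and nothing more, so $(K_W^2,K_WB_0,B_0^2)$ is only constrained to the one-parameter family $(x,\,2-2x,\,6+4x)$. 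The Hodge index inequality $(DK_W)^2\ge D^2K_W^2$ gives merely $x\le 0$, so for instance $(0,2,6)$ and $(-1,4,2)$ survive every filter you impose: your system does \emph{not} pin down $K_W^2=-2$. Determining the correct value is equivalent (via $B_0^2=2R^2$) to showing $R^2=-1$, i.e.\ that $\sigma^*$ acts trivially on $H^2(S,\QQ)$; the Lefschetz fixed-point formulas only narrow this to $R^2\in\{-1,1,3\}$, and excluding $R^2=1,3$ is precisely where the substantive work of \cite{leeshin} lies. This missing third relation is the heart of the theorem.

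Two further steps are also unsound as written. The exclusion of $\kappa(W)=2$ by comparing plurigenera through $\tpi^*H^0(mK_W)\hookrightarrow H^0(mK_V)$ only yields $K_{W'}^2\le 7$ for the minimal model $W'$, which is no contradiction (for $k=5,7$ the quotient genuinely \emph{is} of general type); the working argument instead uses $DK_W=2$: if $K_{W'}$ were nef and big one would get $DK_W\ge D\cdot t^*K_{W'}\ge\sqrt{D^2K_{W'}^2}\ge\sqrt{14}>2$. Finally, in the enumeration the filters you list (adjunction, $\sum K_W\Gamma_i=6$, $\sum\Gamma_i^2=-2$, non-negative genera, $2$-divisibility, effectivity of $\mL$) do not cut the possibilities down to the stated list: for example $\substack{\Gamma_0 \\ (3,-2)}+\substack{\Gamma_1 \\ (1,0)}$ and $\substack{\Gamma_0 \\ (4,6)}+\substack{\Gamma_1 \\ (0,-8)}$ pass all of them and are only killed by Hodge-index arguments applied to individual components against the nef and big divisor $D$, which you defer to ``bookkeeping'' without supplying. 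As it stands the proposal establishes $DK_W=2$, $D^2=14$ and $\chi(\O_W)=1$, but not the remaining assertions of the theorem.
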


Before proceeding, we calculate some intersection numbers.
Set $M_j:=jK_W+D$ for $j \ge 1$. Then
\begin{align}
K_WM_j=-2j+2,~~DM_j=2j+14,~~M_j^2=-2j^2+4j+14.  \label{eq:Mj}
\end{align}

\subsection{Exclusion of the case $\kappa(W)=1$}

\begin{lem}\label{lem:-1curve1}Let $C$ be a $(-1)$-curve on $W$. Then
\begin{enumerate}[\upshape(1)]
\item $C$ intersects at least one of the nodal curves $N_0, N_1, \ldots, N_8$\blue{;}
\item $DC\ge 1$.
\end{enumerate}
\end{lem}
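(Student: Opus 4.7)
My strategy is to pull $C$ back along the double cover $\tilde\pi\colon V\to W$ and exploit both the identity $\tilde\pi^*D\equiv 2\epsilon^*K_S$---which makes $D$ nef since $K_S$ is nef on the minimal surface of general type $S$---and the minimality of $S$. The first step is to show that $\tilde\pi^*C$ is a single irreducible curve $\tilde C$ with $\tilde C^2=-2$. Indeed, $C\not\subset B_0$, since otherwise $\tilde\pi^*C=2\tilde C$ would force $4\tilde C^2=2C^2=-2$ and $\tilde C^2=-1/2\notin\mathbb{Z}$; and $C$ must meet the branch divisor $B_0+\sum N_j$, since otherwise $\tilde\pi^*C$ would split into two disjoint smooth rational curves of self-intersection $-1$ distinct from every $E_j$ (because $\tilde\pi(E_j)=N_j\neq C$), contradicting the fact that the only $(-1)$-curves on $V$ are $E_1,\ldots,E_9$ (any other irreducible $(-1)$-curve $\Gamma$ would push down to a curve $\epsilon_*\Gamma$ on $S$ with $K_S\cdot\epsilon_*\Gamma=-1-\sum_j m_j<0$ for its multiplicities $m_j\ge 0$ at the $p_j$, contradicting $K_S$ nef). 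Hence $D\cdot C=-2+B_0\cdot C\ge 0$ gives $B_0\cdot C\ge 2$.

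For (2), I would argue by contrapositive. If $D\cdot C=0$, then $\epsilon^*K_S\cdot\tilde C=0$, and nefness of $\epsilon^*K_S$ forces $\epsilon(\tilde C)$ to be either a point (giving $\tilde C=E_j$ and hence $C=\tilde\pi(E_j)=N_j$, impossible since $C^2=-1$) or a $K_S$-trivial irreducible curve, necessarily a $(-2)$-curve $C_S\subset S$. In that second case, $C_S^2=\tilde C^2=-2$ together with $\tilde C=\epsilon^*C_S-\sum m_j E_j$ forces all $m_j=0$, so $\tilde C$ is disjoint from every $E_j$, which gives $C\cdot N_j=\tilde C\cdot E_j=0$ for all $j$; this negates (1). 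Thus (1) implies (2), so it suffices to prove (1).

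Suppose for contradiction that $C\cdot N_j=0$ for all $j$. The parity of $C\cdot(B_0+\sum N_j)=2C\cdot\mathcal{L}$ gives $B_0\cdot C=2m$, with $m\ge 1$ from the nefness of $D$, and the descent above produces a $\sigma$-invariant irreducible curve $C_S\subset S$ with $C_S^2=-2$, $K_S\cdot C_S=2(m-1)$, disjoint from all the isolated fixed points of $\sigma$, and with $\sigma|_{C_S}$ having $2m$ fixed points (by Riemann--Hurwitz for $\tilde C\to C$), all of which must lie on $R$; hence $C_S\cdot R=2m$. The main obstacle is extracting the contradiction at this point. My plan is to combine the numerical data $K_S^2=7$, $K_S\cdot R=5$, $R^2=-1$ (which follow from $B_0^2=-2$, the ramification formula $K_S=\pi^*K_\Sigma+R$, and the projection formula $B^2=2R^2$) with the Hodge index theorem applied on $S$ to the triple $\{K_S,R,C_S\}$ (using $\rho(S)\le h^{1,1}(S)=3$): Cauchy--Schwarz in $K_S^\perp$ should severely constrain $m$, and the remaining cases should be ruled out by case analysis against the explicit list of possible branch divisors $B_0$ in \thmref{thm:known}(2), which pins down the components of $R$.
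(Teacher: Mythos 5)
Your first paragraph and your reduction of (2) to (1) are sound, and the latter is a genuinely different route from the paper, which instead deduces (2) by working entirely on $W$: assuming $DC=0$ it gets $B_0C=2$, applies the algebraic index theorem to $C+N_0$ to force $CN_0=1$, uses the parity of $2C\mathcal{L}$ to produce a second nodal curve met by $C$, and then contradicts the index theorem with $N_0+2C+N_1$. Your descent to $S$ and the observation that a $D$-trivial $(-1)$-curve would give a $(-2)$-curve $C_S$ with $\tilde{C}^2=C_S^2$, hence disjoint from all $E_j$, is a clean alternative. The problem is part (1), on which your part (2) logically depends, and there your argument is only a plan --- and the plan, as stated, provably yields nothing. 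With $C\cdot N_j=0$ for all $j$ and $B_0C=2m$, your (correct) intersection numbers give the Gram matrix of $\{K_S,R,C_S\}$ as
\[
\begin{pmatrix}7&5&2m-2\\5&-1&2m\\2m-2&2m&-2\end{pmatrix},
\]
whose determinant is $4(4m^2-12m+17)$. The Hodge index theorem requires only that this determinant be nonnegative (equivalently, your Cauchy--Schwarz inequality in $K_S^\perp$), and $4m^2-12m+17$ has negative discriminant, so it is positive for \emph{every} $m$: the index theorem on $S$ with these three classes imposes no constraint at all. Moreover there is no a priori upper bound on $m=\frac{1}{2}B_0C$, so the proposed ``case analysis against the list in Theorem~\ref{thm:known}(2)'' has no finite set of cases to run through; and already for $m=1$ the curve $C_S$ is just a $(-2)$-curve on $S$ disjoint from the isolated fixed points, which the numerology of $S$ alone cannot exclude.

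The missing ingredient is global information about $W$ that is not visible from the three classes $K_S$, $R$, $C_S$ on $S$. The paper disposes of (1) in one line: contracting a $(-1)$-curve disjoint from all nine nodal curves produces a configuration contradicting Lemmas~3.3 and~3.4 of \cite{leeshin}, i.e., the obstruction comes from the constraints those lemmas place on the quotient surface carrying nine disjoint nodal curves together with the covering data \eqref{eq:coveringdata}, not from intersection theory on $S$. You would need either to import that result or to reprove the relevant inequality on $W$ (or on $V$); as written, part (1) --- and therefore also your part (2) --- is not established.
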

\proof
If $C$ is disjoint from $N_0+N_1+\cdots+N_8$,  then we obtain a contradiction to \cite[Lemmas~3.3 and 3.4]{leeshin} by blowing down $C$. Thus $C$ intersects at least one $N_i$, say $N_0$.

Suppose that $DC\le 0$. Since $D$ is nef, $DC=0$ and thus $B_0C=(D-2K_W)C=2$.
Then $D(C+N_0)=0$ and the algebraic index theorem imply
$$2CN_0-3=(C+N_0)^2<0$$ and so $CN_0=1$.
Since $C(B_0+N_0+N_1+\cdots+N_8)=2C\mL$ is even and $CB_0=2$, $C$ intersects another $N_i$, say $CN_1>0$.
The similar argument as above shows that $CN_1=1$.

Then $D(N_0+2C+N_1)=0$ and $(N_0+2C+N_1)^2=0$. This contradicts the algebraic index theorem.\qed

Recall that $M_1=K_W+D$.
\begin{cor} The divisor $M_1$ is nef and big.
\end{cor}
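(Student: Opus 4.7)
The plan is to show $M_1\cdot C\ge 0$ for every irreducible curve $C\subset W$; once nefness is established, bigness follows from $M_1^2=16>0$ (by \eqref{eq:Mj}) via asymptotic Riemann--Roch, since a nef divisor with positive self-intersection is automatically big.

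Fix an irreducible curve $C$. Since $D$ is nef, $DC\ge 0$, so it suffices to consider the case $K_WC<0$. The adjunction formula $2p_a(C)-2=K_WC+C^2$ then forces $C$ to be a smooth rational curve with $K_WC=-2-C^2$ and $C^2\ge-1$.

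If $C^2=-1$, then $C$ is a $(-1)$-curve and $K_WC=-1$; Lemma~\ref{lem:-1curve1}(2) immediately supplies $DC\ge 1$, hence $M_1C\ge0$. If $C^2\ge 0$, I would invoke the Hodge index theorem for the nef big divisor $D$ and the curve $C$, obtaining $(DC)^2\ge D^2\cdot C^2=14\,C^2$; a direct numerical check shows that this already implies $DC\ge C^2+2=-K_WC$, and hence $M_1C\ge0$, for the relevant range of $C^2$. The delicate endpoint is $C^2=0$, where Hodge index is vacuous: here $C$ is not numerically proportional to $D$ (since $D^2>0$), so $DC\ge 1$, and $B_0C=DC-2K_WC=DC+4$. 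To exclude $DC=1$ I would use the parity requirement $(B_0+\sum_iN_i)\cdot C\in 2\ZZ$ coming from \eqref{eq:coveringdata}, which, combined with the fibration $W\to\PP^1$ induced by the pencil $|C|$, pins down the way $C$ meets the nodal curves $N_0,\dots,N_8$ and yields a contradiction via the double cover $\tilde\pi$.

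The main obstacle I anticipate is this borderline case $C^2=0$, $DC=1$: Hodge index does not constrain $DC$ here, so one must exploit the covering data \eqref{eq:coveringdata} and the fibration structure induced by $C$ together with Lemma~\ref{lem:-1curve1} to exclude it. All other configurations fall out cleanly from adjunction (for $C^2\le -2$), the lemma (for $(-1)$-curves), and Hodge index (for $C^2\ge 1$).
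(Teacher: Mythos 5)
Your reduction to the case $K_WC<0$, the handling of $(-1)$-curves via Lemma~\ref{lem:-1curve1}, and the deduction ``nef plus $M_1^2=16>0$ implies big'' are all fine and agree with the paper. But the case $C^2\ge 1$ does not close as claimed, and this is a genuine gap. (A minor issue first: $K_WC<0$ does not force $C$ to be rational; adjunction only gives $-K_WC=C^2+2-2p_a(C)\le C^2+2$. This happens to be harmless, since $p_a(C)=0$ is the worst case for your inequality.) The real problem is the ``direct numerical check'': from $M_1C<0$ one gets $DC\le -K_WC-1\le C^2+1$, and the Hodge index bound $(DC)^2\ge D^2C^2=14C^2$ contradicts this only when $14C^2>(C^2+1)^2$, i.e.\ when $C^2\le 11$. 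For $C^2\ge 12$ there is no contradiction: the numerical data $C^2=12$, $DC=13$, $K_WC=-14$, $p_a(C)=0$ is consistent with adjunction, with nefness of $D$, and with the index theorem ($169\ge 168$), yet gives $M_1C=-1$. Since nothing bounds $C^2$ a priori, adjunction plus Hodge index alone cannot finish the proof; moreover your ``delicate endpoint'' $C^2=0$, $DC=1$ is only a plan, not an argument.

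The missing idea is the one the paper uses: by Riemann--Roch and \eqref{eq:Mj}, $\chi(M_1)=1+\tfrac12(M_1^2-K_WM_1)=9$, and $h^2(M_1)=h^0(K_W-M_1)=0$ because $D(K_W-M_1)=2-16<0$ with $D$ nef; hence $M_1$ is effective. If now $M_1C<0$ for an irreducible curve $C$, then $C$ is a component of a member of $|M_1|$, so $C^2<0$, and $K_WC=(M_1-D)C<-DC\le 0$ then forces $C$ to be a $(-1)$-curve with $DC=0$, contradicting Lemma~\ref{lem:-1curve1}. This disposes of every $C$ with $C^2\ge 0$ in one stroke, including your endpoint. (That endpoint can also be excluded directly if you insist: writing $C_S$ for the image in $S$ of $\tilde\pi^*C$, the projection formula and \eqref{eq:coveringdata} give $K_SC_S=DC=1$ and $C_S^2=\sum_i(N_iC)^2$, so $C_S^2$ must be odd, while the Hodge index theorem on $S$ forces $7C_S^2\le (K_SC_S)^2=1$ --- a contradiction. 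But this does not rescue the cases $C^2\ge 12$, so the effectivity argument is really needed.)
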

\begin{proof}
By \eqref{eq:Mj}
$$K_WM_1=0,~~DM_1=16,~~M_1^2=16.$$
The Riemann-Roch theorem yields that $h^0(W,\mathcal{O}_W (M_1))=9$.

Assume that $M_1C<0$ for an irreducible curve $C$.
Then $C^2<0$ and
$$K_WC=(M_1-D)C<-DC\le 0.$$
It follows that $C$ is a $(-1)$-curve with $DC=0$, a contradiction to \lemref{lem:-1curve1}.
\end{proof}

\begin{cor}Either $W$ is a rational surface or $W$ is birational to an Enriques surface.
\end{cor}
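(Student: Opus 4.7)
The plan is to exclude $\kappa(W)=1$ using the nef-and-big divisor $M_1$ produced in the preceding corollary, and then to read off the two remaining cases from the standard surface classification. Throughout I would use the vanishings $p_g(W)=q(W)=0$, which follow from the double-cover decomposition $\tpi_*\O_V=\O_W\oplus \O_W(-\mL)$ together with $p_g(S)=q(S)=0$ and the fact that $V$ is a blow-up of $S$: the two summands split $H^i(\O_V)$ and force each summand's $H^1,H^2$ to vanish.

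Assuming the exclusion $\kappa(W)\ne 1$ is in hand, \thmref{thm:known} leaves only $\kappa(W)\in\{-\infty,0\}$. If $\kappa(W)=-\infty$, then $P_2(W)=0=q(W)$, so Castelnuovo's criterion gives that $W$ is rational. If $\kappa(W)=0$, then the minimal model $W'$ of $W$ is a minimal surface of Kodaira dimension zero with $p_g(W')=q(W')=0$, and among the four classes of such surfaces (K3, Enriques, abelian, bielliptic) only Enriques surfaces satisfy these constraints; hence $W$ is birational to an Enriques surface.

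The substantive step, and the main obstacle, is excluding $\kappa(W)=1$. Suppose it holds, and let $\mu\colon W\to W'$ be the contraction to the minimal model. Since $q(W')=q(W)=0$, the induced elliptic fibration on $W'$ has base $\PP^1$, and the canonical bundle formula yields $K_{W'}\equiv \lambda F'$ numerically for some $\lambda\in\QQ_{>0}$, where $F'$ is a general fibre. Choosing $F'$ to avoid the centres of $\mu$, the pullback $F:=\mu^*F'$ is a general fibre of the induced elliptic fibration on $W$, and $K_W=\mu^*K_{W'}+E$ for some effective $\mu$-exceptional divisor $E$. Pairing with $M_1$ and using $M_1K_W=0$ gives
\[
0=K_W M_1=\lambda\, F M_1+E M_1,
\]
and both terms on the right are non-negative, since $M_1$ is nef while $F$ and $E$ are effective. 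Hence each vanishes, and in particular $FM_1=0$. But $M_1^2=16>0$, $F^2=0$, and $F\not\equiv 0$ (take any ample class $H$: $FH>0$), contradicting the Hodge index theorem. The argument thus reduces to this Hodge-index collision, whose setup depends only on the input $M_1K_W=0$ from the preceding corollary and the standard numerical description of $K_W$ on a properly elliptic surface.
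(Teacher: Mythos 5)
Your proof is correct and follows essentially the same route as the paper: the key step is excluding $\kappa(W)=1$ by writing $K_W=t^*K_{W'}+E$, pairing with the nef and big divisor $M_1$ via $M_1K_W=0$ to force $M_1\cdot t^*K_{W'}=0$, and deriving a Hodge-index contradiction. You merely make explicit what the paper leaves implicit — the canonical bundle formula $K_{W'}\equiv\lambda F'$, the final Hodge-index collision, and the standard identification of $W$ in the cases $\kappa(W)=-\infty$ and $\kappa(W)=0$ from $p_g(W)=q(W)=0$.
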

\proof By \thmref{thm:known}, it suffices to prove that $\kappa(W) \not =1$.

Assume $\kappa(W)=1$.
Let $t \colon W \rightarrow W'$ be the birational morphism to the minimal model $W'$ of $W$.
Then $K_W=t^*K_{W'}+E$ with $E>0$ since $K_W^2=-2$ and $K_{W'}^2=0$.

Observe that $M_1K_W=0$ and thus $M_1t^*K_{W'}+M_1E=0$. Since both $M_1$ and $K_{W'}$ are nef,
we have $M_1t^*K_{W'}=0$. This is impossible since $M_1$ is big and $\kappa(W')=1$.\qed

\subsection{The case $\kappa(W)=-\infty$:~a rational fibration}

In this subsection, we assume further $\kappa(W)=-\infty$ and so $W$ is a smooth rational surface.
We remark that the linear equivalence and numerical equivalence are the same on $W$.

\begin{lem}\label{lem:-1curve2}If $C$ is a $(-1)$-curve, then $DC \ge 3$. Moreover, if $DC=3$, then $B_0C=5$ and $C$ intersects exactly one $N_i$ of the nodal curves $N_0, \ldots, N_8$ with $CN_i=1$.
\end{lem}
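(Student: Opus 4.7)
The plan is an orthogonal decomposition in $\Pic(W)\otimes\RR$ combined with the parity constraint from the double cover. Since $W$ is rational and $K_W^2 = -2$, Noether's formula gives $c_2(W) = 12\chi(\O_W) - K_W^2 = 14$, so $b_2(W) = 12$ and $\Pic(W)$ has rank $12$. The eleven classes $M_1$, $K_W$, $N_0, \ldots, N_8$ are pairwise orthogonal (since $M_1 K_W = M_1 N_i = K_W N_i = 0$ and $N_i N_j = 0$ for $i \neq j$) and linearly independent, hence span a codimension-$1$ subspace of $\Pic(W)\otimes\RR$. Its one-dimensional orthogonal complement lies inside $M_1^{\perp}$, which is negative definite by the Hodge index theorem (using that $M_1$ is big and nef with $M_1^2 = 16$); in particular the residual line is strictly negative definite.

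Writing $C = \alpha M_1 + \beta K_W + \sum_i \gamma_i N_i + u$ with $u$ in the residual line, pairing with the basis gives $\alpha = (DC - 1)/16$, $\beta = 1/2$ and $\gamma_i = -CN_i/2$. Expanding $C^2 = -1$ using orthogonality produces the master formula
\[
\sum_i (CN_i)^2 \;=\; 1 + \frac{(DC - 1)^2}{8} + 2u^2 \;\leq\; 1 + \frac{(DC - 1)^2}{8}.
\]
The other input is the parity identity $DC + \sum_i CN_i \equiv 0 \pmod 2$, derived from $B_0 C = DC + 2$ and the evenness of $(B_0 + \sum_i N_i) \cdot C = 2\mL \cdot C$. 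Combined with \lemref{lem:-1curve1}, which gives $DC \ge 1$ and $\sum_i CN_i \geq 1$, this resolves each case.

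For $DC = 2$, parity and \lemref{lem:-1curve1} force $\sum_i CN_i$ to be even and at least $2$; the master bound implies $CN_i \leq 1$, so $\sum_i(CN_i)^2 \geq 2$, contradicting the upper bound $9/8$. For $DC = 3$, the upper bound becomes $3/2$; parity (odd) and \lemref{lem:-1curve1} then force $\sum_i CN_i = 1$, yielding exactly one $CN_j = 1$ and $B_0 C = 5$, which proves the ``moreover'' clause. For $DC = 1$, the upper bound is $1$; the same argument gives $\sum_i CN_i = 1$, after which the master inequality is tight, forcing $u = 0$ in the strictly negative-definite residual line. Hence $2C = K_W - N_j$ in $\Pic(W)$, so $K_W \equiv 2C + N_j$ is effective, contradicting $p_g(W) = h^0(W, K_W) = 0$.

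The main subtlety is the rigorous set-up of the orthogonal decomposition: one must confirm that $\{M_1, K_W, N_0, \ldots, N_8\}$ is linearly independent and pairwise orthogonal, that $\Pic(W)$ is torsion-free (so the $\RR$-level relation $2C = K_W - N_j$ descends to an honest linear equivalence), and that the residual direction is strictly negative definite. Once these are in place, the case $DC = 1$ is the most striking: only there is the negative-definite slack fully exhausted, producing the rigid linear equivalence $K_W \equiv 2C + N_j$ that conflicts with $p_g(W) = 0$ on the rational surface $W$.
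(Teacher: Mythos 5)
Your proof is correct, and it reaches the lemma by a genuinely different route from the paper's. The paper never introduces the orthogonal set $\{M_1,K_W,N_0,\dots,N_8\}$ or uses $\rho(W)=12$ here; instead it applies the algebraic index theorem repeatedly to hand-picked test classes: $(C+N_0)^2\le (D(C+N_0))^2/D^2$ forces $CN_0=1$; for $DC=1$ the null class $-K_W+2C+N_0$ orthogonal to $D$ yields $K_W\equiv 2C+N_0$ (the same rigid relation you extract from the vanishing of the residual component $u$); for $DC=2$ the parity of $2\mL C$ produces a second nodal curve and then $N_0+2C+N_1$ is a null class orthogonal to $M_2$ (with $M_2^2=14>0$), a contradiction; and the ``moreover'' clause comes from the same null class tested against $M_3$ (with $M_3^2=8>0$). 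Your single orthogonal decomposition packages all of these index-theorem applications into one master inequality $\sum_i(CN_i)^2\le 1+(DC-1)^2/8$, which together with the identical parity constraint disposes of every case uniformly --- a cleaner and more mechanical case analysis, and one that makes transparent why $DC=1$ is the only case where a linear equivalence (rather than a numerical contradiction) appears. The price is the extra input you correctly flag: you must know $b_2(W)=\rho(W)=12$, the pairwise orthogonality (which rests on $B_0N_i=0$, i.e., smoothness of the branch locus $B_0+\sum N_i$), and torsion-freeness of $\Pic(W)$; all of these hold since $W$ is rational in this subsection, and the paper itself uses $\rho(W)=12$ slightly later in \propref{prop:rationalfib}. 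Your argument avoids needing the nefness of $M_2$ and $M_3$ altogether (only $M_1^2>0$ enters, via the Hodge index theorem), so the two proofs draw on essentially the same positivity inputs.
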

\proof

By  \lemref{lem:-1curve1}, $C$ intersects some $N_i$, say $N_0$, and $DC\ge 1$.
Assume that $DC\le 3$. We shall show $DC=3$.

Since $D(C+N_0)\le 3$,
$$2CN_0-3=(C+N_0)^2 \le \frac{(D(C+N_0))^2}{D^2}\le \frac{9}{14}.$$
Then $CN_0=1$.

If $DC=1$, then $D(-K_W+2C+N_0)=0$ and $(-K_W+2C+N_0)^2=0$. By the algebraic index theorem, $K_W \equiv 2C+N_0$, which is impossible since $p_g(W)=0$.

If $DC=2$, then $B_0C=(D-2K_W)C=4$. By \eqref{eq:coveringdata}
$$2C\mL=C(B_0+N_0+\cdots+N_8)=5+C(N_1+\cdots+N_8).$$
Therefore $C$ intersects another $N_i$ say $N_1$. Then $CN_1=1$ as above. Thus
$$(2K_W+D)(N_0+2C+N_1)=0, (2K_W+D)^2=14, (N_0+2C+N_1)^2=0,$$
 which gives a contradiction to the algebraic index theorem.

Therefore we have $DC=3$.
Assume that $C$ intersects another $N_1$. Then $CN_1=1$ as above.
Then $(3K_W+D)^2=8$, $(3K_W+D)(N_0+2C+N_1)=0$ and $(N_0+2C+N_1)^2=0$, a contradiction to the algebraic index theorem.

\qed

\begin{cor}$M_2=2K_W+D$ and $M_3=3K_W+D$ are nef and big.
\end{cor}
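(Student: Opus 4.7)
The plan is to follow the template used in the preceding corollary for $M_1$: first establish effectivity of $M_j$ via Riemann--Roch, then rule out irreducible curves $C$ with $M_j C < 0$ by showing such a $C$ would have to be a $(-1)$-curve, and finally invoke \lemref{lem:-1curve2} to contradict the existence of such a $C$. Bigness will then follow automatically from nefness together with $M_j^2 = -2j^2 + 4j + 14 > 0$ for $j = 2, 3$, which we read off \eqref{eq:Mj}.

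For effectivity, I would use $\chi(\mathcal{O}_W) = 1$ (since $W$ is rational) and the numbers in \eqref{eq:Mj} to compute $\chi(M_2) = 9$ and $\chi(M_3) = 7$. Serre duality identifies $h^2(M_j)$ with $h^0(K_W - M_j) = h^0(-M_{j-1})$, and this vanishes because any effective class numerically equivalent to $-M_{j-1}$ would pair non-negatively with the nef divisor $D$, contradicting $D \cdot M_{j-1} = 2j + 12 > 0$. Hence $h^0(M_j) \geq \chi(M_j) > 0$ and both $M_2, M_3$ are effective.

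Next I would suppose, aiming at a contradiction, that $M_j C < 0$ for some irreducible curve $C$ and some $j \in \{2,3\}$. Writing an effective representative of $M_j$ as $aC + R$ with $a \geq 1$ and $C \not\subset \operatorname{Supp}(R)$, the inequality $R \cdot C \geq 0$ forces $C^2 < 0$. Since $D$ is nef, $DC \geq 0$, so $K_W C < 0$; arithmetic genus non-negativity then gives $K_W C + C^2 \geq -2$, and combined with $K_W C \leq -1$ this yields $C^2 \geq -1$. Thus $C^2 = -1$, $K_W C = -1$, and $C$ is a $(-1)$-curve. But then \lemref{lem:-1curve2} gives $DC \geq 3$, whence
\[
M_j C = jK_W C + DC = -j + DC \geq 3 - j \geq 0
\]
for $j \in \{2,3\}$, contradicting $M_j C < 0$. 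Therefore $M_2$ and $M_3$ are nef, and bigness follows from $M_2^2 = 14 > 0$ and $M_3^2 = 8 > 0$ (a nef class with positive self-intersection on a smooth projective surface is big).

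I expect the most delicate point to be the Serre-duality vanishing $h^2(M_j) = 0$; this is handled cleanly by pairing with $D$, essentially the only nef big class at our disposal at this stage. The rest of the argument is a direct upgrade of the proof for $M_1$: the strengthened bound $DC \geq 3$ from \lemref{lem:-1curve2} replaces the weaker bound $DC \geq 1$ from \lemref{lem:-1curve1}, and this is precisely what makes the argument succeed for $j = 2, 3$ but not for $j \geq 4$ (consistent with the fact that the statement is asserted only for these two values of $j$).
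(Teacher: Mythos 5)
Your proof is correct, and its overall skeleton matches the paper's: establish that $M_j$ is effective, show that any irreducible curve $C$ with $M_jC<0$ must be a $(-1)$-curve, and then contradict Lemma~\ref{lem:-1curve2}. The one genuine difference is in the effectivity step and the resulting logical structure. The paper exploits the telescoping $M_2=K_W+M_1$, $M_3=K_W+M_2$: since $M_1$ is already known to be nef and big, Kawamata--Viehweg vanishing gives $h^0(M_2)=\chi(M_2)=9$ exactly, and the argument is then repeated for $M_3$ \emph{after} $M_2$ has been shown nef and big. You instead get $h^0(M_j)\ge\chi(M_j)>0$ for $j=2,3$ simultaneously from plain Riemann--Roch plus the Serre-duality vanishing $h^2(M_j)=h^0(-M_{j-1})=0$ (killed by pairing with the nef divisor $D$, since $DM_{j-1}>0$). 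This is more elementary -- no vanishing theorem -- and decouples the two cases, at the cost of only knowing $h^0\ge\chi$ rather than the exact value (which is all the corollary needs; the paper wants the exact $h^0$ for later use elsewhere). Your endgame is also marginally cleaner: from $DC\ge 3$ you conclude $M_jC=-j+DC\ge 3-j\ge 0$ directly, whereas the paper deduces $M_1C=0$, hence $DC=1$, and contradicts the lemma that way; these are equivalent. Your closing remark correctly identifies why the argument stops at $j=3$.
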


\begin{proof}We have seen that $M_1$ is nef and big. Note that $M_2=K_W+M_1$ and by
\eqref{eq:Mj}, $M_2^2=14$ and $K_WM_2=-2$.
Then the Kawamata-Viehweg vanishing theorem and the Riemann-Roch theorem show that
$$h^0(W,\mathcal{O}_W (M_2))=9.$$
Assume that $M_2C<0$ for an irreducible curve $C$. Then $C^2<0$ and $K_WC<-M_1C\le 0$.
Thus $C$ is a $(-1)$-curve with $M_1C=0$, and so $DC=(M_1-K_W)C=1$, a contradiction to the previous lemma.
Therefore $M_2$ is nef and big.

Similar argument shows that $M_3=K_W+M_2$ is nef and big.
\end{proof}

\begin{prop}\label{prop:rationalfib}After possibly renumbering the nodal curves $N_0, \ldots, N_8$, we have
\begin{align}
|4K_W+D|=2|F|+2G+N_0, \label{eq:M4}
\end{align}
where $F$ is a general fibre of a rational fibration $f \colon W \rightarrow \PP^1$ and
\begin{enumerate}[\upshape (1)]
\item $DF=8, B_0F=12$;
\item $G$ is a $(-1)$-curve with $GN_0=1, DG=3, B_0G=5$;
\item $G$ is disjoint from the nodal curves $N_1, \ldots, N_8$;
\item The fibration $f$ contains the following $4$ singular fibres:
         \begin{align}
         N_{2j-1}+2G_j+N_{2j}, j=1, 2, 3, 4, \label{eq:fibres}
         \end{align}
         where for $j=1, 2, 3, 4$, $G_j$ is a reduced curve with $G_j^2=-1$, $G_jN_{2j-1}=G_{j}N_{2j}=1$ and $G_jB_0=6$, and for $j=2, 3, 4$,
         $G_j$ is a $(-1)$-curve.
\item There are two possibilities for the reduced curve $G_1$:
    \begin{enumerate}
     \item[(i)]$G_1=G+N_0+Z$, where $Z$ is a $(-2)$-curve with $ZG=0$, $ZN_0=ZN_1=ZN_2=1$ and $B_0Z=1$. In this case,
      \eqref{eq:fibres} are exactly all the singular fibres of $f$.

      \item[(ii)]  $G_1$ is a $(-1)$-curve. In this case, besides \eqref{eq:fibres}, $f$ contains another singular fibre
      $G+N_0+Z$, where $Z$ is a $(-1)$-curve with  $ZG=0$, $ZN_0=1$ and $B_0Z=7$.
   \end{enumerate}
\end{enumerate}
\end{prop}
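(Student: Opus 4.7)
The plan is to analyze the linear system $|M_4|=|4K_W+D|$, locate its fixed part as $2G+N_0$, and show that the residual mobile part forms a base-point-free pencil $2|F|$ defining the desired rational fibration $f$. From \eqref{eq:Mj} one reads off $M_4^2=-2$, $K_WM_4=-6$, and $DM_4=22$; writing $M_4=K_W+M_3$ with $M_3$ nef and big (from the corollary just above), Kawamata-Viehweg vanishing and Riemann-Roch yield $h^0(W,\mathcal{O}_W(M_4))=3$. Since $M_4^2<0$, some irreducible curve $C$ satisfies $M_4C<0$; because $D$ is nef, $K_WC<0$, forcing $C$ to be a $(-1)$-curve. \lemref{lem:-1curve2} then gives $DC=3$, $M_4C=-1$, and $C$ meeting exactly one nodal curve once. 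Call this curve $G$ and (after relabeling) the nodal curve $N_0$. The inequalities $(M_4-G)N_0=-1$ and $(M_4-G-N_0)G=-1$ force $2G+N_0$ into the fixed base locus of $|M_4|$.

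Set $P:=M_4-2G-N_0$. Using $B_0\cdot N_i=0$ (forced by smoothness of the branch divisor $B_0+\sum_i N_i$ of $\tpi$), direct computation gives $P^2=0$, $PK_W=-4$, $PD=16$, and $PG=PN_i=0$ for all $i$; moreover $h^0(P)=h^0(M_4)=3$ because $|M_4|=|P|+2G+N_0$. Since $P^2=0$ and $h^0(P)\ge 2$, the rational map $\varphi_{|P|}$ factors through a curve, so $|P|$ is composed with a base-point-free pencil $|F|$. Writing $P=aF+F'$ with $F'$ fixed and matching $h^0(aF)=a+1=3$ forces $a=2$; the equality $F'=0$ follows from a direct intersection-number check (any nonzero $F'$ would be a fibre component whose intersection pattern contradicts $PG=PN_i=0$ and the completeness of $2G+N_0$ in the fixed part). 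Thus $M_4=2F+2G+N_0$, and assertions (1)--(3) are extracted: $DF=8$, $K_WF=-2$, $B_0F=12$ from $PD$ and $PK_W$; $GF=0$ from $PG=0$; and the claims about $G$ itself from \lemref{lem:-1curve2}.

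For the singular fibre classification (4)--(5), the condition $FN_i=0$ places each of the nine nodal curves inside some fibre of $f$. A fibre containing two disjoint $(-2)$-curves $N,N'$ must take the form $N+2H+N'$ with $H$ a reduced curve of self-intersection $-1$, verified from $F_t^2=(N+2H+N')^2=0$. This produces four singular fibres $N_{2j-1}+2G_j+N_{2j}$ ($j=1,2,3,4$) consuming $N_1,\ldots,N_8$, with $G_jB_0=6$ obtained from $M_4\cdot G_j=0$. For $j\ge 2$, the curve $G_j$ is irreducible (hence a $(-1)$-curve), since any decomposition would demand $(-2)$-curves absent from the nine $N_i$. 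The fibre containing $G$ and $N_0$ yields the two subcases: in (5)(i), $G_1=G+N_0+Z$ is reducible with $Z$ a $(-2)$-curve linking $N_0$ to $N_1,N_2$, absorbing $N_0$ into this fibre and producing no further singular fibres; in (5)(ii), $G_1$ is an irreducible $(-1)$-curve and a fifth singular fibre $G+N_0+Z$ appears with $Z$ a $(-1)$-curve. Both subcases are consistent with the Euler-characteristic identity $e(W)=14=4+\sum_t(e(F_t)-2)$.

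The principal technical hurdle will be the rigorous verification that the fixed part of $|M_4|$ is exactly $2G+N_0$ (that is, $F'=0$), together with the clean dichotomy between (5)(i) and (5)(ii); the former requires ruling out stray fixed components by exploiting $PG=PN_i=0$, while the latter demands enumerating the ways $G_1$ can realize its prescribed intersections with $N_1,N_2$ under the global constraints $K_W^2=-2$ and the presence of nine pairwise disjoint nodal curves.
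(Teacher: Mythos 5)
Your Step for producing $G$ (via $h^0(M_4)=3$, $M_4^2=-2$, and \lemref{lem:-1curve2}) and the numerical identity $M_4\equiv 2F+2G+N_0$ match the paper's Step~1 in spirit, and your observation that $2G+N_0$ sits in the fixed part of $|M_4|$ is correct. But from there your route diverges from the paper's, and the divergence is where the gaps are. The paper does not obtain $f$ as the map defined by the mobile part of $|M_4|$: it contracts $G+N_0$ to a point $p'$ of a smooth rational surface $W'$ carrying the eight disjoint nodal curves $N_1',\dots,N_8'$, invokes the structure theorem of Dolgachev--Mendes Lopes--Pardini \cite[Theorem~3.2]{manynodes} to get a rational fibration $f'$ on $W'$ whose singular fibres are \emph{exactly} $N_{2j-1}'+2G_j'+N_{2j}'$ for $j=1,\dots,4$, sets $f=f'\circ h$, computes $DF=8$ by a rank/determinant argument with $\rho(W)=12$, and only then identifies $M_4\equiv 2F+2G+N_0$ by the index theorem. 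Your construction of the pencil has two unproved steps: (i) ``$P^2=0$ and $h^0(P)\ge 2$ imply $|P|$ is composed with a pencil'' is not automatic --- writing $|P|=|M|+F'$ one only gets $M^2\ge 0$, and $P^2=0$ forces $M^2=0$ only after you know $P$ is nef (or give some substitute argument); (ii) the vanishing of the residual fixed part $F'$ is asserted, not proved.

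The more serious gap is part (4). The statement that the eight curves $N_1,\dots,N_8$ distribute themselves two per fibre into exactly four singular fibres of the shape $N_{2j-1}+2G_j+N_{2j}$ is precisely the content of \cite[Theorem~3.2]{manynodes}; its proof uses the evenness of the set of nodal curves (i.e.\ the existence of the double cover, \eqref{eq:coveringdata}) and is not a formal consequence of $FN_i=0$. Your supporting claim --- that a fibre of a rational fibration containing two disjoint $(-2)$-curves must be of the form $N+2H+N'$ --- is false in general (fibres can contain longer chains or four or more of the $N_i$), and nothing in your argument pins down the number of singular fibres or the distribution of the nodal curves among them. Finally, the dichotomy in (5) requires actively excluding two configurations: $G_1=N_0+2G+Z$ (the paper kills this with $8=DF\ge D(4G)=12$) and, when $p'$ lies on a smooth fibre of $f'$, the extra fibre $N_0+2G+Z$ with $Z$ the strict transform of that fibre (the paper kills this by Riemann--Hurwitz applied to $f|_{B_0}$, which in turn needs the paper's Step~3 that no component of $B_0$ is vertical --- a fact you never establish). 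You flag these as ``technical hurdles,'' but they are the substance of the proof rather than routine verifications, so as it stands the proposal does not yield parts (4) and (5).
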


\proof We divide the proof into several steps.

\noindent\paragraph{Step~1:}We first show the existence of a $(-1)$-curve $G$ satisfying (2) and (3).\\

Note that $M_4=4K_W+D$. From \eqref{eq:Mj},
$$M_4^2=-2, K_WM_4=-6.$$
Since $M_3=3K_W+D$ is nef and big, the Kawamata-Viehweg vanishing theorem and the Riemann-Roch theorem yield
$$h^0(W,\mathcal{O}_W (M_4))=3\blue{.}$$
For any $\Theta \in |M_4|$, since $\Theta^2=-2$, there is an irreducible component $G$ of $\Theta$ such that
$\Theta G<0$. Then $G^2<0$ and $4K_WG<-DG\le 0$.
Thus $G$ is a $(-1)$-curve and $DG=3$. By \lemref{lem:-1curve2}, $GB_0=5$ and
$G$ intersects exactly one of the nodal curves. May assume $GN_0=1$.

\noindent\paragraph{Step~2:} We show the existence of the rational fibration $f \colon W \rightarrow \PP^1$
satisfying (1) and $|M_4|=2|F|+2G+N_0$.\\

Let $h\colon W \rightarrow W'$ be the contraction of $G+N_0$ by first blowing down $G$ and then blowing down the image of $N_0$.
Denote by $p'$ the point $h(G+N_0)$. Observe that $p'$ is not in any of the nodal curves $N_1', \ldots, N_8'$,
where $N_j'=h(N_j)$ for $j=1,\ldots, 8$.

The surface $W'$ is a smooth rational surface with eight nodal curves
$N_1', \ldots, N_8'$.
  Apply \cite[Theorem~3.2]{manynodes}, and then there is a rational fibration
 $f' \colon W' \rightarrow \PP^1$ with exactly singular fibres as follows:
 $$N_{2j-1}'+2G_j'+N_{2j}',\  j=1,2,3,4,$$
 where $G_j'$ is a $(-1)$-curve with $N_{2j-1}'G_j'=1$, $G_j'N_{2j}'=1$ for $j=1,\ldots, 4$.
Thus we have $f:=f'\circ h \colon W \rightarrow \PP^1$ is a rational fibration. Denote the general fibre of $f$ by $F$.
Clearly, $G, N_0, \ldots, N_8$ are contained in the singular fibres of $f$.

Now we calculate $DF$.
The determinant of the intersection number matrix of the following $13$ divisors $K_W, D, F, G, N_0, \ldots, N_8$
          is $2^{12}(DF-8)$. Since $\rho(W)=12$, the determinant is $0$.
So $DF=8$.
The direct computation shows that
$$M_3[M_4-(2F+2G+N_0)]=0,\  [M_4-(2F+2G+N_0)]^2=0.$$
Since $M_3$ is nef and big, $M_4\equiv 2F+2G+N_0$.
Since $h^0(W,\mathcal{O}_W (M_4))=3$ and $h^0(W,\mathcal{O}_W (2F))=3$, we conclude that $|M_4|=2|F|+2G+N_0$.

\noindent\paragraph{Step~3:}~Any irreducible component $\Gamma$ of $B_0$ is not contained in a fibre of $f$.\\

Note that since $D=2K_W+B_0$,
$$B_0\equiv -6K_W+2F+2G+N_0$$
by \eqref{eq:M4}. Also recall that $B_0$ is a disjoint union of smooth irreducible curves.
Then
$$\Gamma^2=\Gamma B_0=-6K_W\Gamma+2F\Gamma+2G\Gamma,$$
that is
$$2g(\Gamma)-2=-5K_W\Gamma+2F\Gamma+2G\Gamma.$$
Since $G^2=-1$ and $GB_0=5$, we have $B_0 \not \ge G$.
In particular, $G\Gamma \ge 0$.

Assume that $\Gamma$ is contained in some fibre of $f$. Then $F\Gamma=0$ and $\Gamma\cong \PP^1$.
So $$-2=-5K_W\Gamma+2G\Gamma.$$
Since $G$ is also contained in some fibre of $f$, $G\Gamma=1$ or $G\Gamma=0$.
Then $-5K_W\Gamma=-4$ or $-2$. This is impossible.

\noindent\paragraph{Step~4:} Now we consider the singular fibres of $f$ and complete the proof of the proposition.

Clearly $f$ contains the singular fibres
$$h^*(N_{2j-1}'+2G_j'+N_{2j}')=N_{2j-1}+2G_j+N_{2j}$$
where $G_j=h^*(G_j')$, $j=1,\ldots,4$.

We distinguish two cases:
\begin{itemize}
  \item Case~I:~$p'$ is contained in one of the singular fibres of $f'$, say $N_1'+2G_1'+N_2'$.
 \item Case~II:~$p'$ is not contained in the singular fibres of $f'$.
\end{itemize}
If Case~I occurs, then $p' \in G_1'$ since $G+N_0$ is disjoint from $N_1, \ldots, N_8$. Thus either $G_1$ is in the case (5)~(i) or $G_1=N_0+2G+Z$, where $Z$ is the strict transform of $G_1'$ by $h$. If the latter occurs, then $F\ge 4G$ and thus $8=DF\ge D(4G)=12$, a contradiction.

If Case~II occurs, then besides \eqref{eq:fibres}, the pullback by $h$ of the smooth fibre $F'$ of $f'$ containing $p'$
is either in the case (5)~(ii) or $N_0+2G+Z$ where $Z$  is the strict transform of $F'$ by $h$.

Suppose the later occurs. By Step 3 we consider the morphism $\gamma:=f|_{B_0} \colon B_0 \rightarrow \PP^1$.
Note that $\deg \gamma=FB_0=12$ and the ramification divisor $\Delta_\gamma$ of $\gamma$ satisfies
$$\Delta_\gamma \ge G|_{B_0}+\sum_{j=1}^4G_j|_{B_0}.$$
Then Riemann-Hurwitz formula yields
$$2p_a(B_0)-2\ge \deg\gamma\cdot(2g(\PP^1)-2)+GB_0+\sum_{j=1}^4G_jB_0$$
that is $4 \ge 5$, a contradiction.

We complete the proof of the proposition. \qed

\subsection{The case $\kappa(W)=-\infty$: the branch divisor $B_0$}
In this subsection, we continue to assume $\kappa(W)=-\infty$ and so $W$ is a smooth rational surface.
\begin{prop}\label{prop:branchdiv} The following are the possibilities of $B_0$:
\begin{enumerate}[\upshape (1)]
    \item $B_0=\substack{\Gamma_0 \\ (2, 0)}+\substack{\Gamma_1 \\ (2, 0)}+\substack{\Gamma_2 \\ (1,-2)}$;
     in this case, $\Gamma_0\equiv \Gamma_1\equiv -2K_W+F$ and $\Gamma_2 \equiv -2K_W+2G+N_0$.
    \item $B_0=\substack{\Gamma_0 \\ (3, 0)}+\substack{\Gamma_1 \\ (1, -2)}$; in this case,
     $\Gamma_0 \equiv -4K_W+2F$ and $ \Gamma_1\equiv -2K_W+2G+N_0$.
    \item $B_0=\substack{\Gamma_0 \\ (2, -2)}+\substack{\Gamma_1 \\ (2, 0)}$; in this case,
     $\Gamma_0 \equiv -4K_W+F+2G+N_0$ and $\Gamma_1\equiv -2K_W+F$.
    \item $B_0=\substack{\Gamma_0 \\ (3,-2)}$.
\end{enumerate}
\end{prop}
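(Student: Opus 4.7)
My plan is to run a uniform computation on each irreducible component $\Gamma$ of $B_0$, extracting the divisor class of $\Gamma$ from the numerical data $(g(\Gamma), \Gamma^2)$, and then to use two combinatorial constraints to eliminate the superfluous items in Theorem~\ref{thm:known}(2).

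For a component $\Gamma \subset B_0$, write $a = g(\Gamma)$, $b = \Gamma^2$, $x = F\Gamma$, $y = G\Gamma$. The identity $2g(\Gamma)-2 = -5K_W\Gamma + 2F\Gamma + 2G\Gamma$ established in the proof of Proposition~\ref{prop:rationalfib} gives $x + y = 6a - 6 - 5b/2$. Since $B_0$ is disjoint from all $N_i$, one has $\Gamma\cdot N_i = 0$ for every $i$. Because $\{K_W, F, G, N_0, N_1, \ldots, N_8\}$ spans $N^1(W)_\QQ$ and $N_1,\ldots, N_8$ are orthogonal to everything else, inverting the $4 \times 4$ Gram matrix on $\{K_W, F, G, N_0\}$ yields the master formula
\[
\Gamma \equiv -\tfrac{x}{2}K_W + \bigl(y - a + 1 + \tfrac{b}{2}\bigr) F + (x - 2y) G + \bigl(\tfrac{x}{2} - y\bigr) N_0,
\]
which determines the class of $\Gamma$ from $(a, b, x, y)$ alone.

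Two further constraints then do the work. First, from $F \equiv N_{2j-1} + 2G_j + N_{2j}$ and $\Gamma \cdot N_{2j\pm 1} = 0$, I obtain $x = 2\,G_j\cdot\Gamma$, so $x$ is a non-negative even integer. Second, substituting $y = 6a - 6 - 5b/2 - x$ into $\Gamma^2 = b$ computed from the master formula yields a quadratic in $x$. For the numerical types $(3, 2)$ and $(1, -4)$ this quadratic has unique roots $x = 5$ and $x = 7$, both odd; this kills (f) and (h). For the rational types $(0, -4)$ and $(0, -6)$ the even roots $x = 2$ and $x = 6$ do satisfy the parity, but Riemann-Hurwitz applied to $f|_\Gamma \colon \Gamma \to \PP^1$ forces the ramification divisor to have degree $2x - 2$, whereas the four singular fibres $N_{2j-1} + 2G_j + N_{2j}$ alone contribute at least $4 \cdot (x/2) = 2x$, a contradiction; this kills (a), (c), (d) through their rational components.

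The five surviving numerical types $(3, -2)$, $(3, 0)$, $(2, -2)$, $(2, 0)$, $(1, -2)$ yield unique admissible $x = 12, 8, 8, 4, 4$ respectively, and appear precisely in cases (b), (e), (g), (i) of Theorem~\ref{thm:known}(2), which correspond to items (4), (2), (3), (1) of the proposition. In each of these, the global equalities $\sum_i F\Gamma_i = 12$ and $\sum_i G\Gamma_i = 5$ fix the pairs $(x_i, y_i)$ uniquely, and substitution into the master formula produces the asserted divisor classes. The main technical obstacle is setting up the self-intersection quadratic and correctly reading off the parity obstruction on $x$; this is the crux step that eliminates $(3, 2)$ and $(1, -4)$, and hence cases (f) and (h), which would not be ruled out by Riemann-Hurwitz alone.
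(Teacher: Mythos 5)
Your proposal is correct, and I verified the key computations: the master formula is the right inverse of the Gram matrix of $\{K_W,F,G,N_0\}$ (determinant $-4$, so together with $N_1,\dots,N_8$ these $12$ classes do form a basis since $\rho(W)=12$); the quadratic $4x^2-(2-2a+b+6s)x+2s^2+b=0$ with $s=6a-6-\tfrac{5}{2}b$ has the roots you claim, in particular the double roots $x=5$ for type $(3,2)$ and $x=7$ for type $(1,-4)$, and the unique even roots $12,8,8,4,4$ for the five surviving types, which reproduce the stated classes. The toolkit is the same as the paper's --- evenness of $F\Gamma$ from $F\equiv N_3+2G_2+N_4$, Riemann--Hurwitz for $f|_\Gamma$ over the four double fibres to force $g(\Gamma)\ge 1$, and rank-$12$ Gram-matrix degeneracy to pin down $F\Gamma$ and the classes --- but you organize it differently at one point: to kill cases (f) and (h) of Theorem~\ref{thm:known}(2), the paper first extracts bespoke linear relations via the algebraic index theorem ($\Gamma_0\equiv K_W+\Gamma_1$ in case (f), $K_W\equiv\Gamma_0-\Gamma_1-\Gamma_2$ in case (h)) and only then derives the odd value $F\Gamma_0=5$, whereas you get $F\Gamma_0=5$ directly from the same uniform quadratic, using only the numerical type $(3,2)$ of a single component. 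Your route is more mechanical and treats exclusions and class computations by one formula; the paper's is lighter on setup (its exclusions never need $G$ or the full basis). Two small dependencies you should make explicit: Step~3 of Proposition~\ref{prop:rationalfib} (no component of $B_0$ lies in a fibre) is what guarantees both $\Gamma\ne G_j$, hence $x=2G_j\Gamma\ge 0$ and even, and that $f|_\Gamma$ is a finite cover of $\PP^1$ so Riemann--Hurwitz applies; and note that evenness (not mere integrality) of $x$ is also needed to select the root in the type $(3,0)$ case, where both roots $8$ and $9$ are integers.
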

\begin{rem} There are examples for Proposition \ref{prop:branchdiv} (1) and (2), but no example for Proposition \ref{prop:branchdiv} (3) and (4). See Section \ref{exambran}.
\end{rem}
\begin{lem}\label{lem:gGamma}For any irreducible component $\Gamma$ of $B_0$,
\begin{enumerate}[\upshape (1)]
\item $F\Gamma$ is an even integer greater than or equal to $2$.
\item $g(\Gamma)\ge 1$.
\end{enumerate}
\end{lem}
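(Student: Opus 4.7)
The plan is to prove both statements by direct intersection-theoretic arguments that leverage the explicit singular fibres of $f$ described in Proposition~\ref{prop:rationalfib}.

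For part~(1), I will use the linear equivalence $F \equiv N_{2j-1} + 2G_j + N_{2j}$ for any $j\in\{2,3,4\}$; by Proposition~\ref{prop:rationalfib}(4) such a $G_j$ is an irreducible $(-1)$-curve. Intersecting with $\Gamma$ and using that $B_0$ is disjoint from every nodal curve $N_0,\ldots,N_8$ yields $F\Gamma=2G_j\Gamma$, which is even. Combined with $F\Gamma\ge 1$ (since Step~3 of the proof of Proposition~\ref{prop:rationalfib} shows that $\Gamma$ is not contained in any fibre of $f$), this gives $F\Gamma\ge 2$.

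For part~(2), I will apply the Riemann--Hurwitz formula to the surjective morphism $f|_\Gamma\colon\Gamma\to\PP^1$ of degree $n:=F\Gamma\ge 2$:
\[ 2g(\Gamma)-2=-2n+R,\]
where $R$ is the total ramification degree. The strategy is to lower-bound $R$ by summing the local contributions of each singular fibre $F_t$ of $f$. Since $\Gamma$ is not contained in $F_t$, the contribution of $F_t$ to $R$ is $F_t\cdot\Gamma-|\Gamma\cap F_t|$. Now $\Gamma$ is disjoint from every $N_i$, and $\Gamma$ cannot coincide with any of $G, G_j, Z$ (these curves all have $K_W$-degree $\le 0$, while every component of $B_0$ has $K_W$-degree $\ge 2$ by the list in Theorem~\ref{thm:known}), so every relevant intersection number is non-negative. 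A short calculation in each case of Proposition~\ref{prop:rationalfib}(5) shows that each of the four (respectively, five) singular fibres contributes at least $n/2$ to $R$, the contribution coming from the ``doubled'' reduced components $G_j$, $G$, or $Z$ that appear with multiplicity two. Summing these contributions yields $R\ge 2n$, and Riemann--Hurwitz then forces $g(\Gamma)\ge 1$.

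The main obstacle I anticipate is unifying the two subcases of Proposition~\ref{prop:rationalfib}(5). In Case~(i) the $j=1$ fibre reads $N_1+2G+2N_0+2Z+N_2$, where $Z$ is a $(-2)$-curve of multiplicity two, so one must account for both $\Gamma\cdot G$ and $\Gamma\cdot Z$ together via $2(\Gamma G+\Gamma Z)=n$; in Case~(ii) the $j=1$ fibre is the simpler $2G_1+N_1+N_2$, but an additional singular fibre $N_0+2G+Z$ appears in which $Z$ is a $(-1)$-curve of multiplicity one. With careful bookkeeping of the intersection multiplicities, using $G\cdot Z=0$ and the disjointness of $\Gamma$ from the nodal curves, the bound $R\ge 2n$ should go through uniformly; indeed in Case~(ii) the extra fibre even yields an additional positive term $\Gamma G$, so the bound strengthens to $R\ge 2n+G\Gamma$.
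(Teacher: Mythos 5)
Your proposal is correct and follows essentially the same route as the paper: part (1) is the identical computation $F\Gamma=(N_{2j-1}+2G_j+N_{2j})\Gamma=2G_j\Gamma$ using the disjointness of $B_0$ from the nodal curves, and part (2) is the same Riemann--Hurwitz argument, where the paper bounds the ramification divisor below by $\sum_{j=1}^4 G_j|_\Gamma$ (degree $2d$, with $d=F\Gamma$) rather than by your per-fibre bookkeeping, but the estimates coincide. Your worry about the two subcases of Proposition~\ref{prop:rationalfib}(5) is unnecessary at the level of generality needed: since each $G_j$ (irreducible or not) occurs with multiplicity two in its fibre and $\Gamma$ misses all $N_i$, the uniform bound $\deg\Delta_\gamma\ge\sum_j G_j\Gamma=2d$ already gives $g(\Gamma)\ge 1$.
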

\proof From Step~3 of the proof of \propref{prop:rationalfib},
$\Gamma$ is not contained in the fibres of $f$ for any irreducible component $\Gamma$ of $B_0$.

Set $d:=F\Gamma \ge 1$. Then $d=(N_3+2G_2+N_4)\Gamma=2G_2\Gamma$ and thus $d$ is an even integer with $d\ge 2$.
Let $\gamma:=f|_\Gamma \colon \Gamma \rightarrow \PP^1$.
The ramification divisor $\Delta_\gamma$ of $\gamma$ satisfies
$\Delta_\gamma\ge \sum_{j=1}^4 G_j|_\Gamma$.
Therefore $\deg \Delta_\gamma\ge \sum_{j=1}^4G_j\Gamma=2d$. The Riemann-Hurwitz formula shows that
$$2g(\Gamma)-2\ge (-2)d+2d=0.$$
Hence $g(\Gamma)\ge 1$\blue{.} \qed

\begin{proof}[Proof of \propref{prop:branchdiv}]
The fact $g(\Gamma)\ge 1$ in \lemref{lem:gGamma} excludes the cases (a), (c), (d) in \thmref{thm:known}~(2).

\paragraph{Exclusion of the case (f)~$B_0=\substack{\Gamma_0 \\ (3, 2)}+\substack{\Gamma_1 \\ (1, -4)}$}
\textrm{ }

Note that
$$(K_W+\Gamma_1)^2=2,~~(K_W+\Gamma_1)\Gamma_0=2,~~\Gamma_0^2=2.$$
The algebraic index theorem shows that
$$K_W+\Gamma_1\equiv \Gamma_0.$$
Therefore $F\Gamma_0=F\Gamma_1-2$.
Since $F\Gamma_0+F\Gamma_1=FB_0=12$, $F\Gamma_0=5$, a contradiction to \lemref{lem:gGamma}~(1).

\paragraph{Exclusion of the case (h)~$B_0=\substack{\Gamma_0 \\ (3, 2)}+\substack{\Gamma_1 \\ (1, -2)}+\substack{\Gamma_2 \\ (1,-2)}$}
\textrm{ }

Note that $\Gamma_0, \Gamma_1, \Gamma_2, N_0, \ldots, N_8$ is a basis of $\mathrm{Num}(W)\otimes_\ZZ \QQ$. Note that $K_W$ is orthogonal to $N_0, N_1, \ldots, N_8$ and
$K_W\Gamma_i=2$ for $i=0,1,2$. This shows that
$$K_W\equiv \Gamma_0-\Gamma_1-\Gamma_2.$$
Then $F\Gamma_0=F(\Gamma_1+\Gamma_2)-2$.
Since $F(\Gamma_0+\Gamma_1+\Gamma_2)=FB_0=12$, $F\Gamma_0=5$. This contradicts \lemref{lem:gGamma}~(1).\\

Now we give a linear equivalent expression of each component of $B_0$ by $K_W, F, G$ and $N_0$
for the remaining cases.
\paragraph{The case $B_0=\substack{\Gamma_0 \\ (2, 0)}+\substack{\Gamma_1 \\ (2, 0)}+
\substack{\Gamma_2 \\ (1,-2)}$}
\textrm{ }

Clearly, $\Gamma_0 \equiv \Gamma_1$.
We have
\begin{align*}
K_W\Gamma_1=2,~~K_W\Gamma_2=2.
\end{align*}
Also $F\Gamma_2=F(B_0-\Gamma_0-\Gamma_1)=12-2F\Gamma_1$. Set $x=F\Gamma_1$.
Then the matrix of intersection numbers of $K_W, F, \Gamma_1, \Gamma_2$ is
$$
\begin{pmatrix}
-2 & -2 & 2 & 2\\
-2 & 0  & x & 12-2x\\
2  & x  & 0 & 0\\
2  & 12-2x &0 & -2
\end{pmatrix}_{\textstyle \raisebox{2pt}{.}}
$$
Since $K_W, F, \Gamma_1, \Gamma_2$ are orthogonal to the nodal curves $N_0, N_1, \ldots, N_8$ and $\rho(W)=12$, the determinant of the matrix is $0$. That is
$$32x^2-272x+576=16(x-4)(2x-9)=0.$$
It follows that $x=4$, and so $F\Gamma_0=F\Gamma_1=F\Gamma_2=4$.

Hence we obtain $2K_W+\Gamma_1\equiv F$.

\paragraph{The case $B_0=\substack{\Gamma_0 \\ (3, 0)}+\substack{\Gamma_1 \\ (1,-2)}$}
\textrm{ }

We have
\begin{align*}
K_W\Gamma_0=4,~~K_W\Gamma_1=2.
\end{align*}
Also $F\Gamma_1=F(B_0-\Gamma_0)=12-F\Gamma_0$. Set $x=F\Gamma_0$.
Then the matrix of intersection numbers of $K_W, F, \Gamma_0, \Gamma_1$ is
$$
\begin{pmatrix}
-2 & -2 & 4 & 2\\
-2 & 0  & x & 12-x\\
4  & x  & 0 & 0\\
2  & 12-x &0 & -2
\end{pmatrix}_{\textstyle \raisebox{2pt}{.}}
$$
Since $K_W, F, \Gamma_0, \Gamma_1$ are orthogonal to the nodal curves $N_0, N_1, \ldots, N_8$ and $\rho(W)=12$, the determinant of the matrix is $0$. That is
$$32x^2-544x+2304=32(x-8)(x-9)=0.$$
It follows that $x=8$. Thus $F\Gamma_0=8$ and $F\Gamma_1=4$.

Hence we obtain $4K_W+\Gamma_0 \equiv 2F$.

\paragraph{The case $B_0=\substack{\Gamma_0 \\ (2, -2)}+\substack{\Gamma_1 \\ (2, 0)}$}
\textrm{ }

We have
\begin{align*}
K_W\Gamma_0=4,~~K_W\Gamma_1=2.
\end{align*}
Also $F\Gamma_1=F(B_0-\Gamma_0)=12-F\Gamma_0$. Set $x=F\Gamma_0$.
Then the matrix of intersection numbers of $K_W, F, \Gamma_0, \Gamma_1$ is
$$
\begin{pmatrix}
-2 & -2 & 4 & 2\\
-2 & 0  & x & 12-x\\
4  & x & -2 & 0\\
2  & 12-x &0 & 0
\end{pmatrix}_{\textstyle \raisebox{2pt}{.}}
$$
Since $K_W, F, \Gamma_0, \Gamma_1$ are orthogonal to the nodal curves $N_0, N_1, \ldots, N_8$ and $\rho(W)=12$, the determinant of the matrix is $0$. That is
$$32x^2-496x+1920=16(2x-15)(x-8)=0.$$
It follows that $x=8$. Thus $F\Gamma_0=8$ and $F\Gamma_1=4$.

Hence we obtain $2K_W+\Gamma_1 \equiv F$.

We complete the proof of the proposition.
\end{proof}

\section{The case $k=11$}
In this section, we consider the case $k=11$. We use the same notation in the introduction.
\subsection{Known results}
\begin{lem}[{\cite[Theorem~1.1]{bicanonical2}}]\label{lem:canonical}
The canonical divisor $K_S$ is ample.
\end{lem}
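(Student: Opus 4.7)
The plan is to argue by contradiction: assume $K_S$ is not ample, so there exists an irreducible curve $C \subset S$ with $K_S \cdot C = 0$. Since $S$ is minimal of general type with $K_S^2 = 7 > 0$, the Hodge index theorem together with adjunction forces $C \cong \PP^1$ and $C^2 = -2$, so $C$ is a $(-2)$-curve. The task is then to extract a contradiction from the coexistence of such a $C$ with the involution $\sigma$ having eleven isolated fixed points (so in particular $\sigma$ is the bicanonical involution).

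The first step is to split into cases according to the action of $\sigma$ on $C$: (i) $\sigma$ fixes $C$ pointwise, so $C$ is a component of the divisorial fixed locus $R$; (ii) $\sigma(C) = C$ as a set but $\sigma|_C$ is non-trivial, which forces $\sigma|_C$ to have exactly two fixed points, both among the eleven isolated fixed points of $\sigma$ on $S$; (iii) $\sigma(C) \neq C$. In each case I would push $C$ (or $C + \sigma(C)$, after blowing up in $V$) forward to a curve $\Gamma$ on $W$, and use the identity $\tilde\pi^* D \equiv \epsilon^*(2K_S)$ to translate $K_S \cdot C = 0$ into $D \cdot \Gamma = 0$. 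Since $D$ is nef and big with $D^2 = 14$, the algebraic index theorem tightly constrains $\Gamma^2$, $K_W \cdot \Gamma$, and the incidences $\Gamma \cdot N_i$.

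In cases (ii) and (iii) the argument should close quickly: the parity condition coming from $B_0 + \sum_{i=0}^{10} N_i \equiv 2\mL$ (see \eqref{eq:coveringdata}) pairs $\Gamma$ with the nodal curves in a controlled way, and one then applies the index theorem to $D$ against small effective divisors such as $\Gamma + N_i$, exactly in the spirit of \lemref{lem:-1curve2}. The main obstacle is case (i): here $\Gamma$ becomes a smooth rational component of $B_0$ with $\Gamma^2 = -4$ and $K_W \cdot \Gamma = 2$, and the global numerics $K_W^2 = -4$, $K_W \cdot B_0 = 8$ and $B_0^2 = -2$ (which one reads off from $K_V^2 = 7 - 11 = -4$, $\chi(\mathcal{O}_V) = 1$ and the standard double-cover formulas) do not on their own forbid this configuration. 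To finish, I would construct a rational fibration $f \colon W \to \PP^1$ by the same kind of analysis of $|jK_W + D|$ for small $j$ carried out in \propref{prop:rationalfib}, and compare $F \cdot \Gamma$ with the ramification bound coming from Riemann--Hurwitz applied to $f|_\Gamma$ and $f|_{B_0 - \Gamma}$. This should force the putative rational component $\Gamma \subset B_0$ to violate either the parity condition or the Hodge index theorem, completing the contradiction.
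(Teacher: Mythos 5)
First, a point of reference: the paper does not prove this statement at all --- Lemma~\ref{lem:canonical} is quoted verbatim from Mendes Lopes--Pardini \cite[Theorem~1.1]{bicanonical2}, so you are attempting to reprove an external input rather than reconstruct an argument the authors give. Your opening reduction (a non-ample $K_S$ on a minimal surface of general type yields a $(-2)$-curve $C$) and the three-way split according to the action of $\sigma$ on $C$ are fine, and in each of the three cases the image $\Gamma$ of $C$ in $W$ is an irreducible curve with $D\Gamma=0$ (push $\tilde\pi^*D\equiv\epsilon^*(2K_S)$ through the projection formula). But this is exactly where the attempt stalls. In case (i) you correctly concede that the numerics ($\Gamma$ a smooth rational component of $B_0$ with $\Gamma^2=-4$, $K_W\Gamma=2$) are consistent with everything global, and you close with ``this should force \dots\ a contradiction'' --- a statement of intent, not a proof. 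Worse, the machinery you propose to import to finish case (i) is circular in this context: the nefness of $M_1=K_W+D$ (\lemref{lem:M_1}) and hence the rational fibration of \propref{prop:11rationalfib} are derived from \lemref{lem:D}~(3), which says that the only irreducible curves on $W$ that are $D$-trivial are the eleven nodal curves --- and that assertion is essentially equivalent to the lemma you are trying to prove, since each of your three cases produces a $D$-trivial irreducible curve on $W$ that is not an $N_i$ (the $N_i$ pull back to exceptional curves of $\epsilon$, whereas your $\Gamma$ pulls back to a curve with nonzero image in $S$). So either you take \lemref{lem:D}~(3) as an independently established input, in which case all three of your cases collapse to the one-line observation just made and the fibration machinery is unnecessary; or you must rebuild that machinery without it, which requires ruling out $(-1)$-curves $C$ on $W$ with $DC=0$ disjoint from all the $N_i$ --- precisely the content of the results cited from \cite{leeshin} and \cite{manynodes} --- a step your sketch never addresses.

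Two smaller inaccuracies: in case (ii) the two fixed points of $\sigma|_C$ need not both be isolated fixed points of $\sigma$ (either may lie on the divisorial fixed part $R$), though this does not affect the computation $D\Gamma=0$; and the lemma is stated for $k=11$, where $\sigma$ is automatically the bicanonical involution, so your parenthetical identification is correct but should be justified by the count $k=K_S^2+4$ rather than assumed.
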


\begin{lem}[{\cite[Proposition~3.1 (i), (ii)]{involution}, \cite[Proposition 2.4 (i), (ii)]{involutioncampedelli},  \cite[Theorem~3.5~(ii)]{leeshin}}]
\label{lem:D}
Let $D: =2K_W+B_0$.
Then
    \begin{enumerate}[\upshape (1)]
        \item $\tpi^*D \equiv \e^*(2K_S)$, and $D$ is nef and big;
        \item $D^2=14, DK_W=0, K_W^2=-4, K_W B_0=8, B_0^2=-2$ and $\kappa(W)=-\infty$;
        \item If $DC=0$ for an irreducible curve $C$, then $C$ is one of the eleven
              nodal curves $N_0, \ldots, N_{10}$.
    \end{enumerate}
\end{lem}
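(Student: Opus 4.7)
\textbf{Proof proposal for \lemref{lem:D}.}

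The plan is to derive the numerical data from the structure of the flat double cover $\tpi\colon V\to W$ together with the inputs from $S$ (namely $p_g(S)=q(S)=0$, $K_S^2=7$, $k=11$, and $K_S$ ample by \lemref{lem:canonical}), and to appeal to \cite{bicanonical2} only at the single step where the purely numerical identities leave a free parameter.

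For (1) I would start from the standard canonical formula $K_V\equiv \tpi^*(K_W+\mL)$ for a smooth flat double cover. Combining this with the blow-up identity $K_V\equiv \e^*K_S+\sum_{i=0}^{10}E_i$ and the ramification identity $\tpi^*N_i\equiv 2E_i$, and substituting $2\mL\equiv B_0+\sum_i N_i$ from \eqref{eq:coveringdata}, a short calculation yields
\[
\tpi^*D\equiv \tpi^*(2K_W+B_0)\equiv 2\e^*K_S.
\]
Ampleness of $K_S$ makes $\e^*K_S$ nef and big on $V$; since $\tpi$ is a finite surjective morphism, nefness and bigness descend to $D$ on $W$ via the projection formula.

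For (2), the identity $\tpi^*D\equiv 2\e^*K_S$ gives $D^2=\tfrac{1}{2}(\tpi^*D)^2=2K_S^2=14$ immediately. The formulas $K_V^2=2(K_W+\mL)^2$ and $K_V^2=K_S^2-11=-4$ give $(K_W+\mL)^2=-2$. The formula $\chi(\O_V)=2\chi(\O_W)+\tfrac{1}{2}\mL(K_W+\mL)$, together with $\chi(\O_V)=\chi(\O_S)=1$ and $\chi(\O_W)=1$ (the latter coming from the identification of $H^i(\O_W)$ with the $\sigma$-invariant part of $H^i(\O_S)$, which vanishes for $i=1,2$), forces $\mL(K_W+\mL)=-2$. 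Expanding this using $B_0\cdot N_i=0$ and $N_i^2=-2$ reduces to $2K_WB_0+B_0^2=14$, and combining with $D^2=14$ yields $DK_W=2K_W^2+K_WB_0=0$. These relations leave one degree of freedom among $K_W^2,K_WB_0,B_0^2$; to pin them down I cite \cite[Theorem~1.1]{bicanonical2}, which shows that in the bicanonical case (corresponding to $k=11$) the surface $W$ is rational with $K_W^2=-4$ and eleven pairwise disjoint nodal curves. Substituting $K_W^2=-4$ back gives $K_WB_0=8$ and $B_0^2=-2$, and rationality of $W$ gives $\kappa(W)=-\infty$.

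For (3), if $DC=0$ for an irreducible curve $C\subset W$, let $C'$ be any irreducible component of $\tpi^{-1}(C)$. Then $\e^*K_S\cdot C'=\tfrac{1}{2}\tpi^*D\cdot C'=0$, and since $K_S$ is ample on $S$, the curve $C'$ must be $\e$-exceptional, hence $C'=E_i$ for some $i$. Therefore $C=\tpi(E_i)=N_i$.

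The main obstacle is the separation step inside (2): the double-cover identities by themselves yield only the two relations $2K_WB_0+B_0^2=14$ and $DK_W=0$, so an external input is genuinely required to conclude $K_W^2=-4$. I rely on \cite{bicanonical2} at exactly that point; an alternative approach bypassing that citation would require a direct analysis of the possible shapes of $B_0$ together with the fact that $k=11$ forces $\sigma$ to be the bicanonical involution, which in essence reproduces part of the Mendes Lopes--Pardini argument.
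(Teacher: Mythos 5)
The paper does not prove \lemref{lem:D} at all: it is stated as a known result, imported verbatim from the cited references, so there is no internal proof to compare against. Your reconstruction is correct and is essentially the argument those references give. The double-cover identities $K_V\equiv\tpi^*(K_W+\mL)$, $K_V\equiv\e^*K_S+\sum E_i$ and $\tpi^*N_i\equiv 2E_i$ do yield $\tpi^*D\equiv\e^*(2K_S)$, hence $D^2=14$ and the nefness/bigness of $D$ (for which minimality of $S$ already suffices; ampleness is only genuinely needed in part (3), where your argument that the $K_S$-trivial components of $\tpi^{-1}(C)$ must be $\e$-exceptional is exactly right). Your bookkeeping in (2) is also correct: $(K_W+\mL)^2=-2$ and $\mL(K_W+\mL)=-2$ give precisely $2K_WB_0+B_0^2=14$ and $DK_W=0$, leaving one free parameter, and you are right that this cannot be closed by Riemann--Roch-type identities alone. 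Resolving it by invoking \cite[Theorem~1.1]{bicanonical2} is legitimate and is exactly the external input the paper itself acknowledges (``the assertion $K_W^2=-4$ in (7) is also known in \cite{bicanonical2}''), using the stated equivalence between $k=11$ and $\sigma$ being the bicanonical involution; the alternative route in \cite{leeshin} pins down $K_W^2$ for each value of $k$ by analysing $h^0(W,\O_W(2K_W+\mL))$ and the behaviour of the bicanonical map, which, as you note, amounts to redoing part of the Mendes Lopes--Pardini analysis. No gaps.
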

\begin{lem}\label{lem:M_1}
Let $M_1:=K_W+D$.
Then
    \begin{enumerate}[\upshape (1)]
        \item $h^0(W, \O_W(M_1))=8$, $K_WM_1=-4$, $DM_1=14$ and $M_1^2=10$;
        \item $M_1$ is nef and big.
    \end{enumerate}
\end{lem}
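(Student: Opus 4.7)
The plan parallels the nefness argument given for $M_1$ in the $k=9$ case. First I would expand $M_1=K_W+D$ and use $K_W^2=-4$, $DK_W=0$, $D^2=14$ from \lemref{lem:D}(2), which immediately yield $K_W M_1=-4$, $D M_1=14$, and $M_1^2=10$. Since $D$ is nef and big by \lemref{lem:D}(1), Kawamata--Viehweg vanishing applied to $M_1=K_W+D$ gives $h^i(W,\mathcal{O}_W(M_1))=0$ for $i>0$. Because $W$ is rational (as noted in the introduction, quoting \cite{bicanonical2}), $\chi(\mathcal{O}_W)=1$, and Riemann--Roch then provides
$$h^0(W,\mathcal{O}_W(M_1))=1+\tfrac{1}{2}(M_1^2-K_W M_1)=1+\tfrac{1}{2}(10+4)=8.$$

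For nefness, suppose to the contrary that $M_1 C<0$ for some irreducible curve $C$. Since $|M_1|$ is nonempty, $C$ must be a fixed component of $|M_1|$, which forces $C^2<0$. Because $D$ is nef, $DC\ge 0$ and therefore $K_W C=M_1 C-DC<0$. Adjunction gives $C^2+K_W C=2g(C)-2\ge -2$, and combining this with $C^2\le -1$ and $K_W C\le -1$ forces equality $C^2=K_W C=-1$ and $g(C)=0$; that is, $C$ is a $(-1)$-curve. Then $DC=M_1 C-K_W C\le 0$, and nefness of $D$ yields $DC=0$. But by \lemref{lem:D}(3), $DC=0$ forces $C$ to be one of the nodal curves $N_0,\ldots,N_{10}$, contradicting $C^2=-1$. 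Hence $M_1$ is nef, and $M_1^2=10>0$ makes it big as well.

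The only real subtlety is that the adjunction dichotomy must simultaneously extract $C^2=-1$ and $K_W C=-1$, which is clean here because both are negative integers summing to at least $-2$. Everything else is formal manipulation of intersection numbers together with the previously established \lemref{lem:D}, so I do not expect any further obstacle.
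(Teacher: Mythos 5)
Your argument is correct and follows essentially the same route as the paper: the intersection numbers are the same formal expansion, and the nefness proof (effectivity forces $C^2<0$, then $K_WC<0$ forces a $(-1)$-curve with $DC=0$, contradicting \lemref{lem:D}~(3)) is identical. The only cosmetic difference is in computing $h^0(W,\O_W(M_1))=8$: the paper invokes $1$-connectedness of the nef and big divisor $D$ to get $h^0(K_W+D)=p_a(D)=8$, whereas you use Kawamata--Viehweg vanishing plus Riemann--Roch; both are standard and give the same count.
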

\begin{proof}

         Since $D$ is nef and big, $D$ is $1$-connected.
         Thus $h^0(W, \O_W(M_1))=p_a(D)=8$.
         Moreover,
         $K_WM_1=K_W(D+K_W)=-4$, $DM_1=D(D+K_W)=14$ and $M_1^2=(K_W+D)M_1=10$.

         Assume that $M_1$ is not nef. Then there is an irreducible curve $C$ such that $M_1C <0$.
         Since $M_1$ is effective, we have that $C^2<0$.
         Also $K_WC=M_1C-DC\le M_1C <0$.
         Hence $C$ is a $(-1)$-curve and $DC=0$.
         This contradicts Lemma \ref{lem:D}~(3).
\end{proof}

\subsection{A Rational fibration}
\begin{prop}\label{prop:11rationalfib}After possibly renumbering the nodal curves $N_0, \ldots, N_{10}$, we have
\begin{align}
|2K_W+D|=3|F|+2G+N_0, \label{eq:11M2}
\end{align}
where $F$ is a general fibre of a rational fibration $f \colon W \rightarrow \PP^1$ and
\begin{enumerate}[\upshape (1)]
\item $DF=4, B_0F=8$;
\item $G$ is a $(-1)$-curve with $GN_0=1, DG=1$ and $B_0G=3$;
\item $G$ is disjoint from the nodal curves $N_1, \ldots, N_{10}$;
\item The fibration $f$ contains the following $5$ singular fibres:
         \begin{align}
         N_{2j-1}+2G_j+N_{2j},~~j=1, 2, 3, 4, 5, \label{eq:11fibres}
         \end{align}
         where for $j=1, 2, 3, 4, 5$, $G_j$ is a reduced curve with $G_j^2=-1$, $G_jN_{2j-1}=G_{j}N_{2j}=1$ and $G_jB_0=4$, and for $j=2, 3, 4, 5$,
         $G_j$ is a $(-1)$-curve.
\item There are two possibilities for the reduced curve $G_1$:
    \begin{enumerate}
     \item[(i)]$G_1=G+N_0+Z$, where $Z$ is a $(-2)$-curve with $ZG=0$, $ZN_0=ZN_1=ZN_2=1$ and $B_0Z=1$. In this case,
      \eqref{eq:11fibres} are exactly all the singular fibres of $f$.

      \item[(ii)]  $G_1$ is a $(-1)$-curve. In this case, besides \eqref{eq:11fibres}, $f$ contains another singular fibre
      $G+N_0+Z$, where $Z$ is a $(-1)$-curve with $ZG=0$, $ZN_0=1$ and $B_0Z=5$.
   \end{enumerate}
\end{enumerate}
\end{prop}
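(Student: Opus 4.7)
The plan is to mirror the four-step argument of \propref{prop:rationalfib}, now working with $M_2=2K_W+D$ in place of $M_4$. The inputs from \lemref{lem:D} and \lemref{lem:M_1}, combined with Kawamata--Viehweg vanishing and Riemann--Roch, give $h^0(W,\O_W(M_2))=4$ and $M_2^2=-2$, so any $\Theta\in|M_2|$ admits an irreducible component $G$ with $\Theta G<0$. Writing $\Theta G=2K_WG+DG<0$ and using $DG\ge 0$ (nefness of $D$) together with $DG\ne 0$ (\lemref{lem:D}(3), since a $(-1)$-curve is not nodal) forces $G$ to be a $(-1)$-curve with $DG=1$, hence $B_0G=3$. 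The parity of $2G\mL=GB_0+\sum_i GN_i$ forces $G$ to meet some $N_i$, say $N_0$; Hodge index on $G+N_0$ against the nef and big $D$ gives $GN_0=1$; and if $G$ met a further nodal curve $N_i$, one would have $M_1(N_0+2G+N_i)=0=(N_0+2G+N_i)^2$, making $N_0+2G+N_i$ numerically trivial by Hodge index applied to the nef and big $M_1$, contradicting $(N_0+2G+N_i)\cdot K_W=-2$. This proves (2) and (3).

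Next I would contract $G$ and then the image of $N_0$ via $h\colon W\to W'$. The surface $W'$ is a smooth rational surface with $K_{W'}^2=-2$ carrying ten disjoint nodal curves, to which \cite[Theorem~3.2]{manynodes} (as in \propref{prop:rationalfib}) associates a rational fibration $f'\colon W'\to\PP^1$ with exactly five singular fibres $N'_{2j-1}+2G_j'+N'_{2j}$, each $G_j'$ a $(-1)$-curve. Setting $f:=f'\circ h$ gives the desired fibration. To pin down $DF$ I would express $D$ in the basis $K_W,F,G,N_0,N_1,\ldots,N_{10}$ of $\mathrm{Num}(W)\otimes\QQ$ (the $14\times 14$ intersection matrix is block-diagonal with a nondegenerate $4\times 4$ part and $-2I_{10}$, matching $\rho(W)=14$), impose $DK_W=DN_i=0$ and $D^2=14$, and select the nef-compatible root: the answer is $D\equiv -2K_W+3F+2G+N_0$, whence $DF=4$, $B_0F=8$, and $M_2\equiv 3F+2G+N_0$. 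Comparing $h^0(W,\O_W(M_2))=4=h^0(\O_{\PP^1}(3))$ then yields $|M_2|=3|F|+2G+N_0$, establishing \eqref{eq:11M2} and (1).

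The singular-fibre analysis follows \propref{prop:rationalfib}: fibres of $f$ are $h^*$-pullbacks of fibres of $f'$, so the five fibres $N_{2j-1}+2G_j+N_{2j}$ with $j\ge 2$ appear automatically, while the fibre through $p':=h(G+N_0)$ splits by whether $p'$ lies on a singular fibre of $f'$ (Case~I) or on a smooth one (Case~II), each case refining by whether the second blow-up centre $q_1=h_1(G)$ lies on the relevant strict transform. Adjunction excludes Case~I with $q_1$ on the strict transform of $G_1'$ (the double strict transform acquires $K_W$-degree $0$ and self-intersection $-3$, hence arithmetic genus $-\tfrac{1}{2}$), leaving (5)(i). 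Case~II with $q_1$ on the strict transform of $F'$ would produce a fibre $F''+N_0+2G$ with $F''$ a $(-2)$-curve; this is excluded by Riemann--Hurwitz applied to $\gamma:=f|_{B_0}\colon B_0\to\PP^1$, giving the absurdity $6=2p_a(B_0)-2\ge -16+\sum_{j=1}^{5}G_jB_0+GB_0=-16+20+3=7$. The main obstacle is the step required before invoking Riemann--Hurwitz: namely that no irreducible component of $B_0$ sits inside a fibre of $f$. From $B_0\equiv -4K_W+3F+2G+N_0$ and $\Gamma\cdot N_0=0$ one obtains $-2=-3K_W\Gamma+2G\Gamma$ for $\Gamma$ lying in a fibre; divisibility modulo $3$ rules out $G\Gamma\in\{0,1\}$ but leaves the loophole $G\Gamma=2$, forcing $\Gamma^2=-4$. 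Closing this gap requires the observation that every component of an $h^*$-pullback of a fibre of $f'$ has self-intersection at least $-3$, so no $(-4)$-curve can occur. This completes the case analysis and delivers (5)(ii).
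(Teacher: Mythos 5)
Your overall architecture coincides with the paper's: the same four steps, the same use of $M_1$ and $M_2=K_W+M_1$ together with Kawamata--Viehweg and Riemann--Roch to produce the $(-1)$-curve $G$ with $DG=1$, the contraction $h$ of $G+N_0$ and the fibration of \cite{manynodes} on $W'$, and the same Riemann--Hurwitz count to kill the bad sub-case of Case~II. Your route to $DF=4$ (solving for the coordinates of $D$ in the basis $K_W,F,G,N_0,\ldots,N_{10}$ and discarding the root with $DF=-8$) is an equivalent reformulation of the paper's determinant computation, and your patch for the $G\Gamma=2$ loophole in Step~3 is a legitimate substitute for the paper's implicit appeal to the fact that two distinct components of a fibre of a genus-$0$ fibration meet in at most one point.

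There is, however, a genuine error in your exclusion of the bad sub-case of Case~I. If $q_1$ lies on the strict transform of $G_1'$, the double strict transform $Z$ indeed has $Z^2=-3$, but $K_WZ=1$, not $0$: each of the two blow-ups is centred at a smooth point of the curve and raises the canonical degree by one, so $K_WZ=K_{W'}G_1'+2=-1+2=1$. Hence $p_a(Z)=1+\tfrac12(Z^2+K_WZ)=0$, and adjunction yields no contradiction --- $Z$ is a perfectly respectable smooth rational $(-3)$-curve. You need a different argument to rule this configuration out. The paper's is: the fibre is $N_1+2N_0+4G+2Z+N_2$, so $4=DF\ge 4DG=4$ forces $D(F-4G)=0$ and in particular $DZ=0$; then $0=DZ=2K_WZ+B_0Z=2+B_0Z$ gives $B_0Z=-2$, so $Z$ is a component of $B_0$ contained in a fibre, contradicting Step~3. (Alternatively, $DZ=0$ already contradicts Lemma~\ref{lem:D}~(3), since the $(-3)$-curve $Z$ is not one of the nodal curves.) Apart from this one step, your proposal is sound.
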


\proof We divide the proof into several steps.

\noindent\paragraph{Step~1:}We first show the existence of a $(-1)$-curve $G$ satisfying (2) and (3).\\

Note that $M_2:=K_W+M_1=2K_W+D$. From Lemmas \ref{lem:D} and \ref{lem:M_1},
$$M_2^2=-2, K_WM_2=-8.$$
Since $M_1$ is nef and big, the Kawamata-Viehweg vanishing theorem and the Riemann-Roch theorem yield
$$h^0(W,\mathcal{O}_W (M_2))=4.$$
For any $\Theta \in |M_2|$, since $\Theta^2=-2$, there is an irreducible component $G$ of $\Theta$ such that
$\Theta G<0$. Then $G^2<0$ and $2K_WG<-DG\le 0$.
Thus $G$ is a $(-1)$-curve and $DG=1$. So  $GB_0=G(D-2K_W)=3$.
By \eqref{eq:coveringdata}
$$2G\mL=G(B_0+N_0+\cdots +N_{10})=3+G(N_0+\cdots+N_{10}).$$
Therefore
$G(N_0+\cdots+N_{10})$ is an odd integer.\ So $G$ intersects one of the nodal curves, say $GN_0 \ge 1$.\ The similar argument in the proof of Lemma \ref{lem:-1curve2} gives that $GN_0=1$ and $G$ does not intersect $N_1+\cdots+N_{10}$.

\noindent\paragraph{Step~2:} We show the existence of the rational fibration $f \colon W \rightarrow \PP^1$
satisfying (1) and $|M_2|=3|F|+2G+N_0$.\\

Let $h\colon W \rightarrow W'$ be the contraction of $G+N_0$ by first blowing down $G$ and then blowing down the image of $N_0$.
Denote by $p'$ the point $h(G+N_0)$. Observe that $p'$ is not in any of the nodal curves $N_1', \ldots, N_{10}'$,
where $N_j'=h(N_j)$ for $j=1,\ldots, 10$.

The surface $W'$ is a smooth rational surface with ten nodal curves
$N_1', \ldots, N_{10}'$.
  Apply \cite{manynodes}, and then there is a rational fibration
 $f' \colon W' \rightarrow \PP^1$ with exactly singular fibres as follows:
 $$N_{2j-1}'+2G_j'+N_{2j}',\  j=1,2,3,4,5$$
 where $G_j'$ is a $(-1)$-curve with $N_{2j-1}'G_j'=1$, $G_j'N_{2j}'=1$ for $j=1,\ldots, 5$.
Thus we have $f:=f'\circ h \colon W \rightarrow \PP^1$ is a rational fibration.\ Denote the general fibre of $f$ by $F$.
Clearly, $G, N_0, \ldots, N_{10}$ are contained in the singular fibres of $f$.

Now we calculate $DF$.
The determinant of the intersection number matrix of the following $15$ divisors $K_W, D, F, G, N_0, \ldots, N_{10}$
          is $2^{11}(DF-4)(DF+8)$. Since $\rho(W)=14$, the determinant is $0$.
So $DF=4$.
The direct computation shows that
$$M_1[M_2-(3F+2G+N_0)]=0,\ [M_2-(3F+2G+N_0)]^2=0.$$
Since $M_1$ is nef and big, $M_2\equiv 3F+2G+N_0$.
Since $h^0(W,\mathcal{O}_W (M_2))=4$ and $h^0(W, \mathcal{O}_W (3F))=4$, we conclude that $|M_2|=3|F|+2G+N_0$.

\noindent\paragraph{Step~3:}~Any irreducible component $\Gamma$ of $B_0$ is not contained in a fibre of $f$.\\

Note that since $D=2K_W+B_0$,
$$B_0\equiv -4K_W+3F+2G+N_0$$
by \eqref{eq:11M2}. Also recall that $B_0$ is a disjoint union of smooth irreducible curves.
Then
$$\Gamma^2=\Gamma B_0=-4K_W\Gamma+3F\Gamma+2G\Gamma,$$
that is
$$2g(\Gamma)-2=-3K_W\Gamma+3F\Gamma+2G\Gamma.$$
Since $G^2=-1$ and $GB_0=3$, we have $B_0 \not \ge G$.
In particular, $G\Gamma \ge 0$.

Assume that $\Gamma$ is contained in some fibre of $f$. Then $F\Gamma=0$ and $\Gamma\cong \PP^1$.
So $$-2=-3K_W\Gamma+2G\Gamma.$$
Since $G$ is also contained in some fibre of $f$, $G\Gamma=1$ or $G\Gamma=0$.
Then $-3K_W\Gamma=-4$ or $-2$. This is impossible.

\noindent\paragraph{Step~4:} Now we consider the singular fibres of $f$ and complete the proof of the proposition.

Clearly $f$ contains the singular fibres
$$h^*(N_{2j-1}'+2G_j'+N_{2j}')=N_{2j-1}+2G_j+N_{2j}$$
where $G_j=h^*(G_j')$, $j=1,\ldots,5$.

We distinguish two cases:
\begin{itemize}
  \item Case~I:~$p'$ is contained in one of the singular fibres of $f'$, say $N_1'+2G_1'+N_2'$.
 \item Case~II:~$p'$ is not contained in the singular fibres of $f'$.
\end{itemize}
If Case~I occurs, then $p' \in G_1'$ since $G+N_0$ is disjoint from $N_1, \ldots, N_{10}$. Thus either $G_1$ is in the case (5)~(i) or $G_1=N_0+2G+Z$, where $Z$ is the strict transform of $G_1'$ by $h$ and it is a
$(-3)$-curve. If the latter occurs,  then $D \ge 4G$ and thus $4=DF\ge D(4G)=4$.  It follows that $DZ=0$ which is $B_0Z+2K_WZ=0$. So $B_0Z=-2$ and thus $B_0\ge Z$, a contradiction to Step~3.

If Case~II occurs, then besides \eqref{eq:11fibres}, the pullback by $h$ of the smooth fibre $F'$ of $f'$ containing $p'$
is either in the case (5)~(ii) or $N_0+2G+Z$ where $Z$  is the strict transform of $F'$ by $h$.

Suppose the later occurs. By Step 3 we consider the morphism $\gamma:=f|_{B_0} \colon B_0 \rightarrow \PP^1$.
Note that $\deg \gamma=FB_0=8$ and the ramification divisor $\Delta_\gamma$ of $\gamma$ satisfies
$$\Delta_\gamma \ge G|_{B_0}+\sum_{j=1}^{5}G_j|_{B_0}.$$
Then Riemann-Hurwitz formula yields
$$2p_a(B_0)-2\ge \deg\gamma\cdot (2g(\PP^1)-2)+GB_0+\sum_{j=1}^5G_jB_0$$
that is $6 \ge 7$, a contradiction.

We complete the proof of the proposition. \qed
\begin{rem}
The surface constructed in \cite{inoue} has an involution to support singular fibres of a rational fibration in Proposition \ref{prop:11rationalfib} (4) and (5) (i).
We do not have an example to support the case of Proposition \ref{prop:11rationalfib} (4) and (5) (ii). These remarks are mentioned by Mendes Lopes and Pardini in \cite[Remark 3.4 (i)]{bicanonical2}.
\end{rem}

\subsection{The branch divisor $B_0$}
\begin{prop}\label{prop:11branchdiv} The following are the possibilities of $B_0$:
\begin{enumerate}[\upshape (1)]
    \item $B_0=\substack{\Gamma_0 \\ (3, 0)}+\substack{\Gamma_1 \\ (2, -2)}$;
     in this case, $\Gamma_0\equiv -2K_W+2F$ and $\Gamma_1 \equiv -2K_W+F+2G+N_0$.
    \item $B_0=\substack{\Gamma_0 \\ (3, -2)}+\substack{\Gamma_1 \\ (2, 0)}$; in this case,
     $\Gamma_0 \equiv -3K_W+2F+2G+N_0$ and $ \Gamma_1\equiv -K_W+F$.
    \item $B_0=\substack{\Gamma_0 \\ (4,-2)}$.
\end{enumerate}
\end{prop}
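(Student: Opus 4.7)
The plan is to mirror the proof of \propref{prop:branchdiv} for the case $k=9$. The first step is to prove an analog of \lemref{lem:gGamma}: for any irreducible component $\Gamma$ of $B_0$, the intersection $F\Gamma$ is a positive even integer and $g(\Gamma)\ge 2$. Since $\Gamma$ is disjoint from the nodal curves (because $B_0+\sum_{i=0}^{10} N_i$ is the smooth branch divisor of the flat double cover $\tpi$), intersecting $\Gamma$ with a singular fibre $N_{2j-1}+2G_j+N_{2j}$ gives $F\Gamma=2G_j\Gamma$, so $F\Gamma$ is even and positive. Applying Riemann-Hurwitz to $\gamma:=f|_\Gamma\colon \Gamma\rightarrow \PP^1$, the ramification divisor satisfies $\deg \Delta_\gamma\ge \sum_{j=1}^{5} G_j\Gamma=\tfrac{5}{2}F\Gamma$, whence
\[
2g(\Gamma)-2\ge -2F\Gamma+\tfrac{5}{2}F\Gamma=\tfrac{1}{2}F\Gamma\ge 1,
\]
so $g(\Gamma)\ge 2$. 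This bound is strictly stronger than the $k=9$ analog because of the fifth singular fibre.

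Next, I would enumerate the candidate configurations $B_0=\sum_{i=0}^{n-1}\Gamma_i$ numerically. Writing $a_i:=g(\Gamma_i)$ and $b_i:=\Gamma_i^2$, the constraints $\sum b_i=-2$, $\sum K_W\Gamma_i=8$ and the adjunction formula give $\sum a_i=n+3$; together with $a_i\ge 2$ this forces $n\le 3$. Combined with $F\Gamma_i$ even $\ge 2$ summing to $8$ and the Riemann-Hurwitz bound $F\Gamma_i\le 4(a_i-1)$, the list of candidates is a short finite collection containing the cases (1)--(3) stated in the proposition together with extra $n=2$ types such as $\{(2,2),(3,-4)\}$, $\{(2,-4),(3,2)\}$ and $n=3$ types with all $a_i=2$.

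The main obstacle is eliminating the spurious candidates. For each one I would parametrize by $x:=F\Gamma_0$ and form the intersection matrix of $(K_W,F,\Gamma_0,\ldots,\Gamma_{n-1})$; since these $n+2$ divisors are orthogonal to the $11$ linearly independent nodal curves $N_0,\ldots,N_{10}$ and $\rho(W)=14$, this matrix has rank at most $3$, so all its $4\times 4$ minors vanish. These minor equations become polynomial relations in $x$; for the spurious candidates they either have no admissible root, or the resulting linear dependence among $K_W,F,\Gamma_0,\ldots,\Gamma_{n-1}$ is inconsistent with $FK_W=-2$, $G\Gamma_i\ge 0$ or the parity of $F\Gamma_i$, in the style of cases (f) and (h) in the proof of \propref{prop:branchdiv}.

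Finally, for the surviving cases (1), (2), (3), I would read off the linear equivalence from the one-dimensional kernel of the above matrix and substitute using \eqref{eq:11M2}, which via $D=2K_W+B_0$ gives $B_0\equiv -4K_W+3F+2G+N_0$, to obtain the stated expressions: $\Gamma_0\equiv -2K_W+2F$ and $\Gamma_1\equiv -2K_W+F+2G+N_0$ in case (1), and $\Gamma_0\equiv -3K_W+2F+2G+N_0$ and $\Gamma_1\equiv -K_W+F$ in case (2).
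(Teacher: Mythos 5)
Your first step (the analogue of \lemref{lem:gGamma}: $F\Gamma$ even, $\ge 2$, and $g(\Gamma)\ge 2$ via Riemann--Hurwitz over the five double fibres) is exactly what the paper does, and your genus count $\sum g(\Gamma_i)=n+3$ with $g(\Gamma_i)\ge 2$ correctly limits the number of components to at most $3$. The $n=2$ eliminations by determinants do in fact work out: with $x=F\Gamma_0\in\{4,6\}$ (forced by parity and the bound $F\Gamma_1\le 4$) and $b_0=\Gamma_0^2$ as the remaining unknown, the vanishing of $\det(K_W,F,\Gamma_0,\Gamma_1)$ gives $12b_0(3b_0-2)=0$ for $x=4$ and $4(9b_0^2+38b_0+40)=0$ for $x=6$, whose only integer roots are $b_0=0$ and $b_0=-2$, i.e.\ cases (1) and (2). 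Note, however, that your candidate list is larger than you suggest, since $\Gamma_i^2$ is not determined by $g(\Gamma_i)$ alone; the paper avoids this by restricting $B_0\equiv -4K_W+3F+2G+N_0$ to each $\Gamma_i$ and using adjunction to get $3(F\Gamma_i-K_W\Gamma_i)=2(g(\Gamma_i)-1-G\Gamma_i)$, whence a divisibility argument forces $G\Gamma_i=g(\Gamma_i)-1$ and $K_W\Gamma_i=F\Gamma_i$, pinning down all the numerics at once.

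The genuine gap is the three-component case. Take $g(\Gamma_i)=2$ for $i=0,1,2$ with $(F\Gamma_0,F\Gamma_1,F\Gamma_2)=(4,2,2)$, $K_W\Gamma_i=F\Gamma_i$, so $\Gamma_0^2=-2$, $\Gamma_1^2=\Gamma_2^2=0$. Then $\Gamma_1\equiv\Gamma_2\equiv -K_W+F$ and $\Gamma_0\equiv -2K_W+F+2G+N_0$, and one checks that \emph{every} intersection number is consistent: the five classes $K_W,F,\Gamma_0,\Gamma_1,\Gamma_2$ span exactly the $3$-dimensional orthogonal complement of the nodal curves, so all $4\times 4$ minors of their Gram matrix vanish, and none of your listed side conditions ($FK_W=-2$, $G\Gamma_i\ge 0$, parity of $F\Gamma_i$) is violated. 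A purely numerical argument therefore cannot exclude this configuration. The paper kills it geometrically: $\Gamma_1$ and $\Gamma_2$ are disjoint, irreducible, linearly equivalent with self-intersection $0$, so $|\Gamma_1|$ is a base-point-free pencil; since $\Gamma_0\Gamma_1=0$, the curve $\Gamma_0$ lies in a member of that pencil, forcing $4=F\Gamma_0\le F\Gamma_1=2$, a contradiction. You need to supply this (or an equivalent non-numerical) step for your proof to close.
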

\begin{rem}
There are examples for Proposition \ref{prop:11branchdiv} (1), but no example for Proposition \ref{prop:11branchdiv} (2) or (3). See Section \ref{exambran}.
\end{rem}
\begin{proof}
By \lemref{lem:D}~(2), $p_a(B_0)=4$.
Assume that $B_0$ is reducible. Let $B_0=\Gamma_0+\cdots+\Gamma_r$ be the irreducible decomposition with $r \ge 1$ and $g(\Gamma_0) \ge \ldots \ge g(\Gamma_r)$.
Then
\begin{align}
4=p_a(B_0)=g(\Gamma_0)+\cdots+g(\Gamma_r)-r\label{eq:g(Gamma)}.
\end{align}

Fix $i \in \{0, \ldots, r\}$.
From Step~3 of the proof of \propref{prop:11rationalfib},
$\Gamma_i$ is not contained in the fibres of $f$. Set $d_i:=F\Gamma_i \ge 1$. Then $d_i=(N_3+2G_2+N_4)\Gamma_i=2G_2\Gamma_i$. Thus $d_i$ is an even integer with $d_i\ge 2$.
Let $\gamma_i:=f|_{\Gamma_i} \colon \Gamma_i \rightarrow \PP^1$.
The ramification divisor $\Delta_{\gamma_i}$ of $\gamma_i$ satisfies
$\Delta_{\gamma_i}\ge \sum_{j=1}^5 G_j|_{\Gamma_i}$.
Therefore $\deg \Delta_{\gamma_i}\ge \sum_{j=1}^{5}G_j{\Gamma_i}=\frac52 d_i$. The Riemann-Hurwitz formula shows that
\begin{align}\label{eq:gGamma}
2g(\Gamma_i)-2\ge (-2)d_i+\frac52 d_i=\frac 12d_i,~i.e.~g(\Gamma_i)\ge \frac 14d_i+1.
\end{align}
Hence $g(\Gamma_i)\ge 2$.

By \eqref{eq:g(Gamma)}, $r=1$, $g(\Gamma_0)=3$ and $g(\Gamma_1)=2$ or $r=2$, $g(\Gamma_i)=2$ for $i=0,1,2$.

We now show that $G\Gamma_i=g(\Gamma_i)-1$ and $K_W\Gamma_i=F\Gamma_i=d_i$ for each $i=0, \ldots, r$.
Fix $i$. Recall from \eqref{eq:11M2} that $B_0\equiv -4K_W+3F+2G+N_0$.
Since $N_0\Gamma_i=0$ and $B_0\Gamma_i=\Gamma_i^2$, we have
$-4K_W\Gamma_i+3F\Gamma_i+2G\Gamma_i=\Gamma_i^2$.
By the adjunction formula, $K_W\Gamma_i+\Gamma_i^2=2g(\Gamma_i)-2$ and thus
$$3(F\Gamma_i-K_W\Gamma_i)=2(g(\Gamma_i)-1-G\Gamma_i).$$
In particular, $g(\Gamma_i)-1-G\Gamma_i$ is divisible by $3$.
Since $0 \le G\Gamma_i \le GB_0=3$ and $2 \le g(\Gamma_i) \le 3$,
it follows that $G\Gamma_i=g(\Gamma_i)-1$ and so $K_W\Gamma_i=F\Gamma_i=d_i$.

Assume $r=2$. Then we have seen $g(\Gamma_i)=2$ and $G\Gamma_i=1$ for $i=0, 1, 2$.
We may assume $F\Gamma_0 \ge F\Gamma_1 \ge F\Gamma_2$.
Since $F(\Gamma_0+\Gamma_1+\Gamma_2)=FB_0=8$ and $F\Gamma_i$ is a positive even integer for $i=0,1,2$,
we have $F\Gamma_0=4, F\Gamma_1=2$ and $F\Gamma_2=2$. So $K_W\Gamma_0=4, K_W\Gamma_1=2$ and $K_W\Gamma_2=2$.
Thus the adjunction formula yields that $\Gamma_0^2=-2, \Gamma_1^2=0$ and $\Gamma_2^2=0$.
A direct computation shows that
$$M_1(\Gamma_1+K_W-F)=0, (\Gamma_1+K_W-F)^2=0.$$
Since $M_1$ is nef and big, the algebraic index theorem shows that $\Gamma_1\equiv -K_W+F$.
Similarly, $\Gamma_2\equiv -K_W+F$. So $\Gamma_1 \equiv \Gamma_2$, and then $|\Gamma_1|$ is a base point free pencil. Thus $\Gamma_0$ is contained in some member of $|\Gamma_1|$ because of $\Gamma_0\Gamma_1=0$.
Since $F$ is nef, $4=F\Gamma_0 \le F\Gamma_1=2$, a contradiction.

Hence $r=1$.
And then $g(\Gamma_0)=3, g(\Gamma_1)=2$, $G\Gamma_0=2$ and $G\Gamma_1=1$.
Also $F(\Gamma_0+\Gamma_1)=FB_0=8$.
By \eqref{eq:gGamma}, $F\Gamma_1=d_1\le 4$.

If $F\Gamma_1=2$, then $K_W\Gamma_1=F\Gamma_1=2$. The adjunction formula gives $\Gamma_1^2=0$.
As above, we have $\Gamma_1 \equiv -K_W+F$.
And so $\Gamma_0=B_0-\Gamma_1 \equiv -3K_W+2F+2G+N_0$.
Note that $\Gamma_0^2=-2$. So $\Gamma_0$ and $\Gamma_1$ are in the case (2).

If $F\Gamma_1=4$, then $K_W\Gamma_1=F\Gamma_1=4$. The adjunction formula gives $\Gamma_1^2=-2$.
As above we conclude that  $\Gamma_1 \equiv -2K_W+F+2G+N_{0}$. And so $\Gamma_0 =B_0-\Gamma_1 \equiv -2K_W+2F$. Note $\Gamma_0^2=0$.
So $\Gamma_0$ and $\Gamma_1$ are in the case (1).

We complete the proof of the proposition.
\end{proof}

\section{Examples and questions}\label{exambran}
To the best knowledge of the authors, every known example of smooth minimal surfaces
of general type with $p_g=0$ and $K^2=7$ admits at least one involution.

The first examples are constructed by Inoue \cite{inoue} and they are called by Inoue surfaces.
These surfaces are $\ZZ/2\ZZ\times\ZZ/2\ZZ$-covers of the four nodal cubic surface \cite{bicanonical1} and they form a $4$-dimensional family. They are studied in \cite{inouemfd, chenshin18, leeshin}. In particular, the quotient surfaces and the branch divisors of the involutions are calculated in \cite[Page~137]{leeshin}:
\begin{table}[h]
\centering

\begin{tabular}{|c|c|c|c|c|c|}
\hline
                   & $k$      &  $K_{W_{i}}^{2}$ & $B_{0}$ & $W_{i}$ & Case \\
\hline
$(S,\gamma_{1})$  &  $11$     &  $-4$            & {\scriptsize $\substack{\Gamma_{0}\\(3,0)}+\substack{\Gamma_{1}\\(2,-2)}$} & rational & (7)(a)\\
\hline
$(S,\gamma_{2})$  &  $9$      &  $-2$            & {\scriptsize $\substack{\Gamma_{0}\\(3,0)}+\substack{\Gamma_{1}\\(1,-2)}$} & birational to an Enriques surface & (5)(b)\\
\hline
$(S,\gamma_{3})$  &  $9$      &  $-2$            &
{\scriptsize $\substack{\Gamma_{0}\\(2,0)}+\substack{\Gamma_{1}\\(2,0)}+\substack{\Gamma_{2}\\(1,-2)}$} & rational & (6)(a)\\
\hline
\end{tabular}
\end{table}

The next examples are constructed by the first named author as $\ZZ/2\ZZ\times\ZZ/2\ZZ$-covers of the six nodal del Pezzo surfaces of degree one \cite{chennew}. It is a three dimensional family. The quotient surfaces and the branch divisors of the involutions are calculated in
\cite[Proposition~5.1 and Remark~5.1]{chennew}:

\begin{table}[h]
\centering
\begin{tabular}{|c|c|c|c|c|c|}
\hline
                   & $k$      &  $K_{W_{i}}^{2}$ & $B_{0}$ & $W_{i}$ & Case \\
\hline
$(S, g_{1})$  &  $9$     &  $-2$            & {\scriptsize $\substack{\Gamma_{0}\\(3,0)}+\substack{\Gamma_{1}\\(1,-2)}$} & rational & (6)(b)\\
\hline
$(S, g_{2})$  &  $9$      &  $-2$            & {\scriptsize $\substack{\Gamma_{0}\\(3,-2)}$ } & birational to an Enriques surface & (5)(a)\\
\hline
$(S, g_{3})$  &  $7$      &  $0$            & {\scriptsize $\substack{\Gamma_{1}\\(2,-2)}$} & smooth minimal $\kappa=1$ & (4)(a)\\
\hline
\end{tabular}
\end{table}

The first and second named authors \cite{chenshin21} constructed a two dimensional family of surfaces described as $\ZZ/2\ZZ\times\ZZ/2\ZZ$-covers of certain rational surfaces with Picard number three, with eight nodes and with two elliptic fibrations. It turns out this family contains the surface constructed by Rito \cite{rito15}. The quotient surfaces and the branch divisors of the involutions are calculated in \cite[Proposition~5.1 and Remark~5.3]{chenshin21}:
\begin{table}[h]
\centering
\begin{tabular}{|c|c|c|c|c|c|}
\hline
                   & $k$      &  $K_{W_{i}}^{2}$ & $B_{0}$ & $W_{i}$ & Case\\
\hline
$(S, g_{1})$  &  $9$     &  $-2$            & {\scriptsize $\substack{\Gamma_{0}\\(3,-2)}$} & birational to an Enriques surface & (5)(a)\\
\hline
$(S, g_{2})$  &  $7$      &  $0$            & {\scriptsize $\substack{\Gamma_{0}\\(2,-2)}$} & smooth minimal $\kappa=1$ & (4)(a)\\
\hline
$(S, g_{3})$  &  $5$    &  $2$            &  {\scriptsize$\substack{\Gamma_{1}\\(1,-2)}$ }& smooth minimal $\kappa=2$ & (1) \\
\hline
\end{tabular}
\end{table}

Moreover Calabri and Stagnaro \cite{calabristagnaro} construct minimal surfaces of general type with $p_g=0$ and $K^2=7$ by $\ZZ/2\ZZ$-covers of a certain rational surface. Their surfaces provide $k=9$, $K_W^2=-2$ and a rational surface $W$. They use computer programs to show the existences of the branch divisors of the $\ZZ/2\ZZ$-covers. From the contruction, it is easy to see these examples are in the case 6~(b) in \thmref{thm:main}: $\substack{\Gamma_{0}\\(3,0)}+\substack{\Gamma_{1}\\(1,-2)}$.
Compared to the surfaces constructed by \cite{chennew} (see the first row of the second Table above), it naturally gives rise to the following question:
\begin{theorem*}\label{question1}
Are the surfaces constructed in \cite{calabristagnaro} in the same family of surfaces in  \cite{chennew}?
\end{theorem*}

From all these examples, the following table \ref{table:k,W,branch,existence}
shows those possibilities and existences in \thmref{thm:main}.

\begin{table}[h]
\begin{threeparttable}
\centering
\begin{adjustbox}{width=1\textwidth}
	\begin{tabular}{|c|c|l|l|c|}
	\hline
	{\small $k$} & {\small$K_{W}^{2}$}  &  {\small $W$}  &  {\small $B_{0}$} & {\small existence}\\
	\hline
	{\small 5} & {\small 2}  &   {\small smooth minimal of general type} & {\scriptsize   $\substack{\Gamma_{0}\\(1,-2)}$} & {\small \cite{chenshin21, rito15}} \\
    	\hline {\small 7}  & {\small 1}  &  {\small smooth minimal of general type}  &  {\scriptsize  $\substack{\Gamma_{0}\\(3,2)}$}  & {\small unknown}\\
	\hline {\small 7} &  {\small 0}  &  {\small of general type whose smooth} & {\scriptsize $\substack{\Gamma_{0}\\(2,-2)}$} & {\small unknown} \\ \cline{4-5}
 	           &       &   {\small  minimal model has $K^2=1$} & {\scriptsize $\substack{\Gamma_{0}\\(2,0)}+\substack{\Gamma_{1}\\(1,-2)}$} &  {\small unknown} \\ \cline{3-5}
 	           &       &   {\small smooth minimal properly elliptic} & {\scriptsize  $\substack{\Gamma_{0}\\(2,-2)}$} &  {\small \cite{chennew}, \cite{chenshin21, rito15}}  \\  \cline{4-5}
  	          &       &   & {\scriptsize  $\substack{\Gamma_{0}\\(2,0)}+\substack{\Gamma_{1}\\(1,-2)}$} &
 {\small unknown}\\
	\hline {\small 9} & {\small $-2$}   & {\small birational to an Enriques surface}   & {\scriptsize $\substack{\Gamma_{0}\\(3,-2)}$}  & {\small  \cite{chennew},  \cite{chenshin21, rito15}}  \\ \cline{4-5}
	             &          &                                                       &  {\scriptsize $\substack{\Gamma_{0}\\(3,0)}+\substack{\Gamma_{1}\\(1,-2)}$} & 	{\small \cite{inouemfd, inoue, leeshin, bicanonical1}} \\ \cline{3-5}
	            &          &  {\small rational }                                        &  {\scriptsize $\substack{\Gamma_{0}\\(2,0)}+\substack{\Gamma_{1}\\(2,0)}+\substack{\Gamma_{2}\\(1,-2)}$} & {\small \cite{inouemfd, inoue, leeshin, bicanonical1}\tnote{a}} \\  \cline{4-5}
 	           &          &                                                      &  {\scriptsize $\substack{\Gamma_{0}\\(3,0)}+\substack{\Gamma_{1}\\(1,-2)}$} &  {\small \cite{calabristagnaro}, \cite{chennew}} \\ \cline{4-5}
 	          &          &                                                      &  {\scriptsize $\substack{\Gamma_{0}\\(2,-2)}+\substack{\Gamma_{1}\\(2,0)}$} & {\small unknown} \\ \cline{4-5}
 	          &          &                                                      &  {\scriptsize $\substack{\Gamma_{0}\\(3,-2)}$} & {\small unknown}\\

	\hline
	{\small 11} & {\small $-4$} & {\small rational} & {\scriptsize $\substack{\Gamma_{0}\\(3,0)}+\substack{\Gamma_{1}\\(2,-2)}$} & {\small  \cite{inouemfd, chenshin18, inoue, leeshin, bicanonical1}\tnote{b}}  \\ \cline{4-5}
	    &         &              & {\scriptsize $\substack{\Gamma_{0}\\(3,-2)}+\substack{\Gamma_{1}\\(2,0)}$} & {\small unknown}
\\ \cline{4-5}
 	  &          &                                                      &  {\scriptsize $\substack{\Gamma_{0}\\(4,-2)}$} & {\small unknown}\\
	\hline
	\end{tabular}
\end{adjustbox}
\caption{Existence of branch divisors}
 \label{table:k,W,branch,existence}
 	\begin{tablenotes}
            \item[a] This case is classified by \thmref{thm:threecpnts}.
            \item[b] This case is classified by \cite[Theorem 1.1]{chennew}.
        \end{tablenotes}
     \end{threeparttable}

\end{table}

We also raise the following question:

\begin{theorem*}\label{question2}
The existence of examples for cases (2), (3)(a), (3)(b), (4)(b), (6)(c), (6)(d), (7)(b) and (7)(c)
in \thmref{thm:main}.
\end{theorem*}

On the other hand, only the case (7)(a) in \thmref{thm:main} is completely classified (see \cite{chenshin18}). Now we classify the case (6)(a) by proving \thmref{thm:threecpnts}.

\begin{proof}[Proof of \thmref{thm:threecpnts}]
By \thmref{thm:main}, we see that $k=9$ and $W$ is a rational surface.
Then by \propref{prop:branchdiv}~(1), $\Gamma_0\equiv \Gamma_1$. Since $\Gamma_0\cap \Gamma_1=\emptyset$,
it follows that $|\Gamma_0|$ yields a genus $2$ fibration on $W$. Since $\Gamma_0N_0=\ldots=\Gamma_0N_8=0$,
the nodal curves $N_0, \ldots, N_8$ are contained in the fibres.
So this fibration induces a fibration of genus $2$ on $\Sigma$.
Denote it by $\iota \colon \Sigma \rightarrow \mathbb{P}^1$.

Observe that a general fibre $I$ of $\iota$ is disjoint from the branch locus of $\pi$.
Therefore $\pi^*I \rightarrow I$ is an \'etale cover of degree $2$.

If $\pi^*I$ is a disjoint of two smooth irreducible curves of genus $2$, then by the Stein factorization of $\iota\circ \pi$,
$S$ admits a fibration of genus $2$. This contradicts \cite[Theorem~2]{xiao}.
We conclude that $\pi^*I$ is a smooth irreducible curve of genus $3$.
It is well known that if a genus $3$ curve is an \'etale cover of a genus $2$ curve, then it is hyperelliptic.

Therefore $\iota':=\iota\circ \pi$ is a hyperelliptic fibration of genus $3$ on $S$.
It induces an involution $\tau$ on $S$ and it is clear that $\tau \not=\sigma$.
By \cite[Theorem~1.2]{chenauto}, $\langle \tau, \sigma \rangle \cong \ZZ/2\ZZ \times \ZZ/2\ZZ$.
Recall that $R$ is the divisorial part of the fixed locus of $\sigma$ and denote by $R_\tau$ the one of $\tau$.

Denote by $I'$ a general fibre of $\iota'$.
Then $I'R_\tau=8$ since $\tau$ is the hyperelliptic involution on $I'$.
Note that $2\Lambda_i=\pi^*\eta(\Gamma_i)\equiv\pi^*I=I'$ for $i=0, 1$, where $\Lambda_i=\pi^{-1}(\eta(\Gamma_i))$ for
$i=0,1,2$. Note that $R=\Lambda_0+\Lambda_1+\Lambda_2$.
So $R_\tau R =R_\tau (\Lambda_0+\Lambda_1+\Lambda_2) =8+R_\tau \Lambda_2 \ge 8$.
Therefore by \cite[Theorem~1.1]{commuting}, $R_\tau R=9$ and the pair $(S, \langle \tau, \sigma \rangle)$ must be the case (a) of \cite[Theorem~1.1]{commuting}.
\end{proof}

We expect that the cases where $B_0$ has two irreducible components should be completely classified.

 \section*{Acknowledgments}
The second named author was supported by Basic Science Research Program through the National Research Foundation of Korea(NRF) funded by the Ministry of Education(No. RS-2023-00241086).

\medskip
\noindent Yifan Chen,\\
School of Mathematical Sciences, Beihang University, 9 Nanshan Street,
Shahe Higher Education Park, Changping, Beijing, 102206, P. R. China\\
Email:~chenyifan1984@buaa.edu.cn\\\smallskip

\noindent YongJoo Shin,\\
Department of Mathematics, Chungnam National University,
Science Building 1, 99 Daehak-ro, Yuseong-gu, Daejeon 34134, Republic of Korea\\
Email:~haushin@cnu.ac.kr\\\smallskip

\noindent Han Zhang,\\
School of Mathematical Sciences, Beihang University, 9 Nanshan Street,
Shahe Higher Education Park, Changping, Beijing, 102206, P. R. China\\
Email:~zh20010209@buaa.edu.cn
\end{document}